\documentclass[12pt,a4paper,reqno]{amsart}
\usepackage{amsfonts,amssymb,a4wide,hyperref}

\newcommand{\N}{\mathbb{N}}              
\newcommand{\Z}{\mathbb{Z}} 
\newcommand{\R}{\mathbb{R}}       
\newcommand{\Rn}{{\R}^n}


\newcommand{\cs}{{\mathcal S}}
\newcommand{\cf}{{\mathcal F}}
\newcommand{\cfi}{{\mathcal F}^{-1}}

\newcommand{\supp}{\operatorname{supp}}


\newtheorem{thm}{Theorem}
\newtheorem{prop}{Proposition}
\newtheorem{lem}{Lemma}
\newtheorem{cor}{Corollary}

\newtheorem*{theorem*}{{\bf Theorem\/}}

\theoremstyle{definition}
\newtheorem{defn}{Definition}
 
\theoremstyle{remark}
\newtheorem{rem}{Remark}
\mathsurround=0.5pt

\title[Local Means and Mixed Norms]{Characterisation by Local Means of 
Anisotropic Lizorkin--Triebel Spaces with Mixed Norms}
\author[J.~Johnsen, S.~Munch Hansen, W.~Sickel]{J.~Johnsen, S.~Munch~Hansen, W.~Sickel}
\subjclass[2010]{46E35}
\keywords{Local means, mixed norms, moment conditions, Tauberian conditions}
\thanks{J.~Johnsen supported by the Danish Council for Independent Research, Natural Sciences (Grant No. 09-065927 and 11-106598)
\\[8\jot]{\tt Appeared in Journal for Analysis and its Applications (ZAA), {\bf 32} (2013), 257--277.}} 

\address{Department of Mathematical Sciences, Aalborg University, Fredrik Bajers Vej 7G,
  DK--9220 Aalborg {\O}st, Denmark} 
  \email{jjohnsen@math.aau.dk}
\address{Department of Mathematical Sciences, Aalborg University, Fredrik Bajers Vej 7G,
  DK--9220 Aalborg {\O}st, Denmark} 
  \email{sabrina@math.aau.dk}
\address{Institute of Mathematics,Friedrich-Schiller-University Jena, 
Ernst Abbe Platz 1--2, D-07743 Jena, Germany}
  \email{Winfried.Sickel@uni-jena.de}

\begin{document}
 \begin{abstract}
This is a contribution to the theory of Lizorkin--Triebel spaces having mixed Lebesgue norms
and quasi-homogeneous smoothness. 
We discuss their characterisation in terms of general quasi-norms based on convolutions.
In particular, this covers the case of local means, in Triebel's
terminology. The main step is an extension of some crucial inequalities due to Rychkov to the
case with mixed norms.
 \end{abstract}
\enlargethispage{3\baselineskip}
\maketitle
 
\section{Introduction}
This paper is devoted to a study of anisotropic Lizorkin--Triebel spaces 
$F^{s,\vec a}_{\vec p,q}(\Rn)$ with mixed norms, which has grown out of work of the first and third
author \cite{JoSi07,JoSi08}.

First Sobolev embeddings and completeness of the scale $F^{s,\vec a}_{\vec p,q}(\Rn)$ 
were covered in \cite{JoSi07}.
As the foundation for this, the Nikol'ski\u\i--Plancherel--Polya inequality for
sequences of functions in the mixed-norm space $L_{\vec p}(\Rn)$ 
was established in \cite{JoSi07} with fairly elementary proofs. Then a detailed 
trace theory for hyperplanes in $\Rn$ was worked out in \cite{JoSi08}, 
e.g.\ with the novelty that the well-known borderline $s=1/p$ has to be shifted upwards in some cases,
because of the mixed norms. 

In the present paper we obtain some general characterisations of the space 
$F^{s,\vec a}_{\vec p,q}(\Rn)$, that may be specialised to kernels of local means. 
We have at least two motivations for this. One is that local means have
emerged in the last decade as the natural foundation for a discussion of wavelet bases for Sobolev
spaces and their generalisations to the Besov and Lizorkin--Triebel scales; cf.\ works of
Triebel~\cite[Thm.~1.20]{Tri08} and e.g.\ Vybiral~\cite[Thm.~2.12]{Vyb06}, 
Hansen~\cite[Thm.~4.3.1]{Han10}.

Secondly, local means will be crucial for the entire strategy in our forthcoming paper
\cite{JoMHSi2}, in which we establish invariance of $F^{s,\vec a}_{\vec p,q}$ under diffeomorphisms
in order to carry over trace results from \cite{JoSi08} to spaces over smooth
domains. More precisely, because of the anisotropic structure of the 
$F^{s,\vec a}_{\vec p,q}$-spaces, we consider them over smooth cylindrical domains in Euclidean
space in \cite{JoMHSi2} 
and develop results for traces on the flat and curved parts of the boundary of the cylinder
in \cite{JoMHSi3}.

To elucidate the importance of the results here and in \cite{JoMHSi2,JoMHSi3}, we recall that
$F^{s,\vec a}_{\vec p,q}$-spaces have applications to parabolic differential equations
with initial and boundary value conditions: when solutions are
sought in a \emph{mixed-norm} Lebesgue space $L_{\vec p}$ 
(e.g.~to allow for different properties in the space and time directions), then
$F^{s,\vec a}_{\vec p,q}$-spaces are in general \emph{inevitable} for a correct
description of non-trivial data on the \emph{curved} boundary.

This conclusion was obtained in works of Weidemaier \cite{Wei98,Wei02,Wei05}, 
who treated several special cases; the reader may consult the introduction of
\cite{JoSi08} for details.

To give a brief review of the present results, we recall that the norm 
$\|\cdot| F^{s,\vec a}_{\vec p,q}\|$ of $F^{s,\vec a}_{\vec p,q}(\Rn)$ is defined 
in a well-known Fourier-analytic way by splitting the frequency space by means of a
Littlewood--Paley partition of unity. 

But to have `complete' freedom, it is natural first of all to work with
convolutions $\psi_j*f$ defined from more arbitrary sequences $(\psi_j)_{j\in\N_0}$ of
Schwartz functions with dilations $\psi_j=2^{j|\vec a|}\psi(2^{j\vec a}\cdot)$ for $j\ge1$. This
requires both the Tauberian conditions that 
$\widehat\psi_0(\xi),\,\widehat\psi(\xi)$ have no zeroes for
$|\xi|_{\vec a}<2\varepsilon$ and $\frac{\varepsilon}{2}  <|\xi|_{\vec a}<\varepsilon$, respectively;
and the moment condition that $D^\alpha\widehat\psi(0)=0$ for $|\alpha|\le M_\psi$.

Secondly, one may work with anisotropic Peetre--Fefferman--Stein maximal functions
$\psi_{j,\vec a}^*f$, and with these our main result can be formulated as follows:

\begin{theorem*}
  If $s<(M_\psi+1)\min(a_1,\dots,a_n)$ and $0<p_j<\infty$, $0<q\le\infty$, the following
  quasi-norms are equivalent on the space of temperate distributions:
  \begin{equation}
    \| f | F^{s,\vec a}_{\vec p,q}\|,
\qquad
    \| \{ 2^{sj}\psi_j*f\}_{j=0}^{\infty} | L_{\vec p}(\ell_q) \|,
\qquad
    \| \{ 2^{sj}\psi_{j,\vec a}^*f\}_{j=0}^{\infty} | L_{\vec p}(\ell_q) \|.
  \end{equation}
Thus $f\in F^{s,\vec a}_{\vec p,q}({\mathbb{R}}^n)$ if and only if one
(hence all) of these expressions are finite.
\end{theorem*}

In the isotropic case, i.e.\ when $\vec a=(1,\dots,1)$ and unmixed $L_p$-norms are used,
the theorem has been known since the important work of Rychkov~\cite{Ry99BPT},
albeit in another formulation. In our generalisation we follow Rychkov's proof
strategy closely, but with some corrections; cf.\ Remark~\ref{Rych0-rem}~below. 

Another particular case is when the functions $\psi_0$ and $\psi$ have compact support, in
which case the convolutions may be interpreted as local means, as observed by
Triebel~\cite{T3}. Thus we develop the mentioned characterisations by local means for the
anisotropic $F^{s,\vec a}_{\vec p,q}$-spaces in Theorem~\ref{local} below, 
and as far as we know, already this part
of their theory is a novelty. As indicated above, it will enter directly into the proofs of our
paper \cite{JoMHSi2}. 

However, it deserves to be mentioned that the arguments in \cite{JoMHSi2} also rely on a stronger
estimate than the inequalities underlying the above theorem. In fact we need to consider parameter
dependent functions $\psi_\theta$, $\theta\in\Theta$ (an index set), that satisfy moment
conditions in a uniform way. Theorem~\ref{maxmod} below gives the precise details and our
estimate of
\begin{equation}
  \|\, \{2^{sj}\sup_{\theta\in\Theta}\psi_{\theta,j,\vec a}^*f\}_{j=0}^\infty \,| L_{\vec
    p}(\ell_q)\|.
\label{theta-eq}
\end{equation}
Similar quasi-norms were introduced by Triebel in the proof of \cite[Prop.~4.3.2]{T3}
for the purpose of showing diffeomorphism invariance of the isotropic scale
$F^s_{p,q}(\Rn)$. However, he only claimed the equivalence of the quasi-norms for
$f$ belonging a priori to $F^s_{p,q}$ and details of proof were not given. Since our estimate of
\eqref{theta-eq} is valid for arbitrary  distributions $f\in{\mathcal{S}}'$, it should be well motivated that we develop
this important tool with a full explanation here.

\begin{rem} \label{Rych0-rem}
The fact that the arguments in \cite{Ry99BPT} are incomplete was observed
in the Ph.D.~thesis of M.~Hansen~\cite[Rem.~3.2.4]{Han10}, where it was exemplified that 
in general a certain $O$-condition is unfulfilled; cf.\ Remark~\ref{Rych-rem} below.
Another flaw is pointed out here in Remark~\ref{Rych1-rem}.
However, to obtain the full generality with
arbitrary temperate distributions in Proposition~\ref{prop:pointwiseEstimate} below, 
we have preferred to reinforce the original proofs of Rychkov. Hence we have 
found it best to aim at a self-contained exposition in this paper.
\end{rem}

\subsection*{Contents}
The paper is organized as follows.
Section~\ref{prel-sect} reviews our notation and gives a discussion of the anisotropic spaces 
of Lizorkin--Triebel type with a mixed norm.  Section~\ref{max-sect} presents some maximal
inequalities for mixed Lebesgue norms. Quasi-norms defined from general systems of Schwartz
functions subjected to moment and Tauberian conditions are estimated in
Section~\ref{Rych-sect}, following works of Rychkov.
In Section~\ref{localMeans} 
these spaces are characterised by such general norms, and by local means.

\section{Preliminaries} \label{prel-sect}
\subsection{Notation}
Vectors $\vec{p}=(p_1,\ldots,p_n)$ with $p_i\in\, ]0,\infty]$ for $i=1,\ldots,n$ are written
$0<\vec{p}\leq \infty$, as throughout inequalities for vectors are understood componentwise;
as are functions, e.g.~$\vec{p}\,!=p_1!\cdots p_n!$. 

By $L_{\vec{p}}({\mathbb{R}}^n)$ we denote the set of all functions $u: {\mathbb{R}}^n \to {\mathbb{C}}$ that are Lebesgue
measurable and such that 
\begin{equation}
\|\, u \, |\, L_{\vec{p}}(\R^n)\| := 
\Big(\int_{\mathbb{R}} \Big( \ldots 
\Big( \int_{\mathbb{R}} |u(x_1,\ldots ,x_n)|^{p_1} dx_1
\Big)^{\frac{p_2}{p_1}}  
\ldots  
\Big)^{\frac{p_n}{p_{n-1}}} dx_n \Big)^{\frac1{p_n}} <\infty,
\end{equation}
with the modification of using the essential supremum over $x_j$ in case $p_j=\infty$.
Equipped with this quasi-norm, $L_{\vec{p}}(\R^n)$ is a 
quasi-Banach space; it is normed if $\min (p_1, \ldots , p_n) \ge 1$.

Furthermore, for $0 < q \le \infty$ we shall use the notation
$L_{\vec{p}} (\ell_q)(\Rn)$ for the space of sequences 
$(u_k)_{k\in\N_0}=\{u_k\}_{k=0}^\infty$ of Lebesgue measurable
functions fulfilling
\begin{equation}
\| \, \{u_k\}_{k=0}^\infty \, |L_{\vec{p}} (\ell_q)(\R^n)\| :=
\Big\| \, \Big(\sum_{k=0}^\infty |u_k|^q
\Big)^{1/q}\Big|L_{\vec{p}} (\R^n)\Big\| < \infty, 
\end{equation}
with supremum over $k$ in case $q=\infty$.
For brevity, we write 
$\| \, u_k \, |L_{\vec{p}} (\ell_q)\| $ instead of $
\| \, \{u_k\}_{k=0}^\infty \, |L_{\vec{p}} (\ell_q)({\mathbb{R}}^n)\|$;
as customary for $\vec{p}=(p,\ldots,p)$, we simplify
$L_{\vec{p}}$ to $L_p$ etc.
If $\max (p_1, \ldots, p_n, q)<\infty$,  sequences of 
$C_0^\infty$-functions  are dense in $L_{\vec{p}} (\ell_q)$. 

The Schwartz space $\cs(\Rn)$ consists of all smooth, rapidly decreasing functions;
it is equipped with the family of seminorms, using $\langle x\rangle^2 := 1+|x|^2$,
\begin{equation}\label{eq:seminormOneParam}
  p_M(\varphi) := \sup \big\{ \langle x\rangle^M |D^\alpha \varphi(x)|\, \big|\, 
  x\in\Rn, |\alpha|\leq M\big\},\quad M\in\N_0,
\end{equation}
whereby $D^\alpha := (-\mathrm{i}\partial_{x_1})^{\alpha_1}\cdots(-\mathrm{i}\partial_{x_n})^{\alpha_n}$ 
for each multi-index $\alpha \in\N^n_0$; or with
\begin{equation}\label{eq:seminormsSchwartz}
p_{\alpha,\beta}(\varphi) := \sup_{x\in\Rn} |x^\alpha D^\beta \varphi(x)|,\quad \alpha,\beta\in \N_0^n.
\end{equation}
The Fourier transformation $\cf g(\xi)=\widehat{g}(\xi) = \int_{\Rn} e^{-\mathrm{i} x\cdot \xi} g(x)\, dx$ for $g\in\cs(\Rn)$
extends by duality to the dual space $\cs'(\Rn)$ of temperate distributions.

Throughout generic constants will mainly be denoted by $c$ or $C$, and 
in case their dependence on certain parameters is relevant this will be explicitly stated.

\subsection{Lizorkin--Triebel spaces with a mixed norm}
  \label{mixd}
As a motivation for the general mixed-norm Lizorkin--Triebel
spaces $F^{s,\vec a}_{\vec p,q}(\Rn)$, we first mention that for $1<\vec{p}<\infty$ 
a temperate distribution $u$ belongs to a class $F^{s,\vec{a}}_{\vec{p},2} (\Rn)$ having
natural numbers $m_j:=s/a_j$ for each $j=1,\dots,n$ if and only if $u$ belongs to
the mixed-norm Sobolev space $W^{\vec{m},\vec a}_{\vec p}({\mathbb{R}}^n)$, $\vec{m}=(m_1,\ldots,m_n)$,
defined by
\begin{equation}
\label{eq:mixedNormSobolev}
\| \, u \, |L_{\vec{p}}(\Rn)\| +\sum_{i=1}^n \Big\| \, 
\frac{\partial^{m_i} u}{\partial x_i^{m_i}}\, \Big| L_{\vec{p}}(\Rn)\Big\|
< \infty.
\end{equation}
This expression defines the norm on $W^{\vec{m},\vec a}_{\vec p}$, which is 
equivalent to that on $F^{s,\vec{a}}_{\vec{p},2}$.

More generally, mixed-norm Lizorkin--Triebel spaces generalise the fractional Sobolev
(Bessel potential) spaces 
$H^{s,\vec a}_{\vec p}(\Rn)$, since for $1 < \vec{p} < \infty$ and $s \in \R$, 
\begin{equation}
  u\in H^{s,\vec a}_{\vec p}(\Rn)  \iff 
  u\in F^{s,\vec{a}}_{\vec{p},2} (\Rn).
\label{HF-id}
\end{equation}
Here the norms are also equivalent; the former is given by 
$\|\,{\mathcal{F}}^{-1}(\langle\xi\rangle^{-s}_{\vec a}\widehat u(\xi))\,|L_{\vec p}\|$, whereby 
$\langle\xi\rangle_{\vec a}$ is an anisotropic version of $\langle \xi\rangle$ compatible with
$\vec a$; cf.~the following.

To account for the Fourier-analytic definition of $F^{s,\vec a}_{\vec p,q}(\Rn)$,
we first recall the anisotropic structure used for derivatives.
Each coordinate $x_j$ in $\Rn$ is given a weight $a_j\geq 1$, collected in $\vec
a=(a_1,\dots,a_n)$.
Based on the quasi-homogeneous dilation 
$t^{\vec a}x:=(t^{a_1}x_1,\dots,t^{a_n}x_n)$ for $t\ge0$, and 
$t^{s\vec a}x:=(t^s)^{\vec a}x$ for $s\in \R$, in particular 
$t^{-\vec a}x=(t^{-1})^{\vec a}x$, the anisotropic distance
function $|x|_{\vec a}$ is introduced for $x\neq 0$ as the unique $t>0$ such
that $t^{-\vec a}x\in S^{n-1}$ (with $|0|_{\vec a}=0$); i.e.\
\begin{equation}
  \frac{x_1^2}{t^{2a_1}}+\dots+  \frac{x_n^2}{t^{2a_n}}=1.
  \label{tax-eq}
\end{equation}
For the reader's convenience 
we recall that $|\cdot|_{\vec a}$ 
is $C^\infty$ on $\Rn\setminus\{0\}$ by the Implicit
Function Theorem. The formula $|t^{\vec a}x|_{\vec a}=t|x|_{\vec a}$ is seen
directly, and this implies the triangle inequality,
\begin{equation}
  |x+y|_{\vec a}\le |x|_{\vec a}+|y|_{\vec a}.
  \label{atriangle-ineq}
\end{equation} 
The relation to e.g.~the Euclidean norm $|x|$ can be deduced from
\begin{equation}
  \max(|x_1|^{1/a_1},\dots,|x_n|^{1/a_n})
  \le |x|_{\vec a}\le |x_1|^{1/a_1}+\dots+|x_n|^{1/a_n}.
  \label{ax-ineq}
\end{equation}
For the above-mentioned weight function, one can e.g.~let
$\langle\xi\rangle_{\vec a}=|(\xi,1)|_{(\vec a,1)}$, using the
anisotropic distance given by $(\vec a,1)$ on $\R^{n+1}$; analogously to 
$\langle\xi\rangle$ in the isotropic case.

We pick for convenience a fixed Littlewood-Paley decomposition, 
written $1\!=\sum_{j=0}^\infty \Phi_j(\xi)$, in the anisotropic setting as follows:
Let $\psi \in C^\infty_0$ be a function such that
$0 \le \psi (\xi) \le 1$ for all $\xi$,
$\psi (\xi) =1$ if $|\xi |_a \le 1$, and $\psi (\xi) =0 $ if 
$|\xi|_a \ge 3/2$.
Then we set $\Phi=\psi-\psi(2^{\vec a}\cdot)$ and define
\begin{equation}\label{unity}
  \Phi_0 (\xi) = \psi (\xi), \qquad \Phi_j (\xi) =
  \Phi (2^{-j\vec{a}}\xi), \quad j=1,2,\ldots 
\end{equation}

\begin{defn} \label{F-defn}
The Lizorkin--Triebel space $F^{s,\vec a}_{\vec p,q}(\Rn)$, where $0<\vec{p} < \infty$ 
is a vector of integral exponents, $s\in \R$ a smoothness index, and 
$0 < q \le \infty$ a sum exponent, is the space of all $u\in\cs'(\R^n)$ such that
\begin{equation}
\| \, u \, |\, F^{s,\vec a}_{\vec p,q}(\Rn)\| := 
\Big\| \Big(\sum_{j=0}^\infty 2^{jsq}\, \left|\cfi \left(\Phi_j (\xi) \, 
\cf u(\xi)\right) ( \cdot )\right|^q \Big)^{1/q} \Big|\, L_{\vec{p}}(\Rn)
\Big\| < \infty  .
\end{equation}
\end{defn}

For simplicity, we omit $\vec a$ when $\vec a = (1,\ldots,1)$ and shall often set
\begin{equation}
u_j (x) := \cfi \left(\Phi_j (\xi) \, \cf u(\xi)\right) (x), \quad x\in \Rn\, , \quad 
j\in \N_0\, .
\end{equation}

Occasionally, we need to consider Besov spaces, which are defined very similarly:

\begin{defn}
For $0<\vec{p}\leq \infty$, $0<q\leq \infty$ and $s\in\R$ 
the Besov space $B^{s,\vec a}_{\vec p,q}(\R^n)$ consists of all $u\in\cs'(\R^n)$ such that
\begin{equation}
  \|\, u\, |\, B^{s,\vec a}_{\vec p,q} \| :=
  \Big( \sum_{j=0}^\infty 2^{jsq} \|\, u_j\, | L_{\vec{p}}(\R^n) \|^q\Big)^{1/q} <\infty.
\end{equation}
\end{defn}

\begin{rem}
The Lizorkin--Triebel spaces $F^{s,\vec a}_{\vec p,q}$ have a long history, as  
they give back e.g.\ the mixed-norm Sobolev spaces $W^{\vec m}_{\vec p}$, cf.\ \eqref{eq:mixedNormSobolev}. 
Anisotropic Sobolev (Bessel potential) spaces    
$H^{s,\vec a}_p$ with $1<p<\infty$ (partly for $s>0$) 
have been investigated in the monographs of Nikol'ski\u\i~\cite{Nik75} and  
Besov, Il'in and Nikol'ski\u\i~\cite{BIN78};
here the point of departure was a definition based on derivatives and differences.
In the second edition  \cite{BIN96} also Lizorkin--Triebel spaces with mixed norms were treated in
Ch.~6.29--30. 
For characterisation of $F^{s,\vec a}_{p,q}$ by differences we refer also to
Yamazaki~\cite[Thm.~4.1]{Y2} and Seeger~\cite{Se89}.

The $F^{s,\vec a}_{\vec p,q}$-spaces were considered for $n=2$ by
Schmeisser and Triebel \cite{ScTr87}, who used the Fourier-analytic characterisation, 
which we prefer for its efficacy what concerns application of powerful
tools from Fourier analysis and distribution theory. 
(The definition of the anisotropy in terms of $|\cdot|_{\vec a}$ is a well-known procedure going
back to the 1960's; historical remarks and some basic properties of $|\cdot|_{\vec a}$
can be found in e.g.\ \cite{Y1}.)
\end{rem}

For later use we recall some properties of these classes. 
First standard arguments, cf.~\cite{JoSi07,JoSi08}, yield the following:

\begin{lem}
\label{lem:contEmbed} 
Each $F^{s,\vec a}_{\vec p,q} (\Rn)$ is a quasi-Banach space, which is normed
if both $\vec p\ge1$ and $q \ge 1$. More precisely,
for $u,v\in F^{s,\vec a}_{\vec p,q}$ and  $d:=\min(1,p_1, \, \ldots, p_n,q)$,
\begin{equation}
  \| \, u+v \, |F^{s,\vec a}_{\vec p,q}\|^d \le 
  \| \, u \, |F^{s,\vec a}_{\vec p,q} \|^d \, + \,  \| \, v \, |F^{s,\vec a}_{\vec p,q} \|^d.
\end{equation}
Furthermore, there are continuous embeddings
\begin{equation}
  \cs (\Rn) \hookrightarrow F^{s,\vec a}_{\vec p,q} (\Rn) \hookrightarrow \cs' (\Rn),  
\end{equation}
where $\cs$ is dense in $F^{s,\vec a}_{\vec p,q}$ for $q<\infty$.
Also, the classes $F^{s,\vec a}_{\vec p,q} $ do not depend on the chosen 
anisotropic decomposition of unity (up to equivalent quasi-norms).
\end{lem}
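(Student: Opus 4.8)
The plan is to treat the four assertions in turn, reducing each to properties of the mixed sequence spaces $L_{\vec p}(\ell_q)$ together with the band-limited estimates from \cite{JoSi07}. For the $d$-triangle inequality I would first show that $L_{\vec p}(\ell_q)$ is itself a $d$-normed space, i.e.\ that $\|\cdot\|^d$ is subadditive on it. Since the Fourier multipliers $\Phi_j$ are linear, one has $(u+v)_j=u_j+v_j$, so the claim for $F^{s,\vec a}_{\vec p,q}$ follows once the $d$-inequality is known for the sequence $\{2^{js}u_j\}$ in $L_{\vec p}(\ell_q)$. The latter I would obtain by working through the nested structure of the quasi-norm from the inside out: at the innermost $\ell_q$-level and at each integration level $L_{p_i}$ one applies either the elementary inequality $(a+b)^r\le a^r+b^r$ valid for $0<r\le1$, or the ordinary triangle inequality when the exponent is $\ge1$; with $d=\min(1,p_1,\dots,p_n,q)$ the monotonicity of the $\ell_r$-functionals in $r$ and the honest norm at the level $p_i/d\ge1$ make every step uniformly available. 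When all of $p_1,\dots,p_n,q\ge1$ each level is an honest norm, which gives the normed case.

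Next come the two embeddings. For $\cs\hookrightarrow F^{s,\vec a}_{\vec p,q}$ I would estimate each $\varphi_j=\cfi(\Phi_j\cf\varphi)$: since $\Phi_j$ is supported where $|\xi|_{\vec a}\sim2^j$ and $\cf\varphi$ is rapidly decreasing, $\|\varphi_j\,|\,L_{\vec p}\|$ decays faster than any power of $2^{-j}$, so the geometric weight $2^{js}$ is harmless and the $L_{\vec p}(\ell_q)$-quasi-norm of $\{2^{js}\varphi_j\}$ is bounded by a single Schwartz seminorm $p_M(\varphi)$. For $F^{s,\vec a}_{\vec p,q}\hookrightarrow\cs'$ I would write $u=\sum_j u_j$ with convergence in $\cs'$, and bound $|\langle u,\varphi\rangle|$ by summing $|\langle u_j,\varphi\rangle|$; here each $u_j$ is band-limited, so a Nikol'ski\u\i--Plancherel--Polya inequality from \cite{JoSi07} lets one dominate $u_j$ by its $L_{\vec p}$-quasi-norm up to a controlled power of $2^j$, and pairing against the rapidly decreasing $\varphi$ yields a convergent sum controlled by $\|u\,|\,F^{s,\vec a}_{\vec p,q}\|$ times a seminorm of $\varphi$.

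Completeness would then follow from the $d$-inequality by the standard argument: given a Cauchy sequence, the embedding into $\cs'$ provides a limit $u\in\cs'$, and since convergence in $\cs'$ forces pointwise convergence of the band-limited pieces $u_j^{(m)}\to u_j$, Fatou's lemma applied to the $L_{\vec p}(\ell_q)$-quasi-norm shows $u\in F^{s,\vec a}_{\vec p,q}$ with convergence in quasi-norm. For density when $q<\infty$ I would first truncate, noting $\|u-\sum_{j\le N}u_j\,|\,F^{s,\vec a}_{\vec p,q}\|\to0$ by dominated convergence on the $\ell_q$-sum, and then cut off each band-limited $\sum_{j\le N}u_j$ in space by a smooth compactly supported factor, using the density of $C_0^\infty$-sequences in $L_{\vec p}(\ell_q)$ recalled above. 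Finally, independence of the decomposition of unity I would reduce to the overlap relation $\Phi_j=\Phi_j\sum_{|k-j|\le1}\Psi_k$ in the frequency variable, so that $u_j^{\Phi}=\sum_{|k-j|\le1}\cfi\Phi_j*u_k^{\Psi}$; bounding these convolutions by the anisotropic Peetre maximal functions of $u_k^{\Psi}$ and invoking the mixed-norm Fefferman--Stein maximal inequality gives the two-sided comparison of quasi-norms.

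\textbf{The main obstacle} is precisely this last multiplier/maximal-function control in the mixed-norm $L_{\vec p}(\ell_q)$-setting, together with the band-limited estimates underlying $F^{s,\vec a}_{\vec p,q}\hookrightarrow\cs'$: both genuinely require the anisotropic Nikol'ski\u\i--Plancherel--Polya and Fefferman--Stein inequalities from \cite{JoSi07} rather than the elementary nesting arguments that suffice for the $d$-inequality, completeness, and density.
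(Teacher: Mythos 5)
Your overall route is the standard one: the paper gives no proof of Lemma~\ref{lem:contEmbed} at all, but simply invokes ``standard arguments, cf.\ \cite{JoSi07,JoSi08}'', and your five reductions are exactly those arguments. In particular, the $d$-triangle inequality via $\|\,u+v\,|L_{\vec p}(\ell_q)\|^d=\|\,|u+v|^d\,|L_{\vec p/d}(\ell_{q/d})\|$ with all exponents $\ge1$, completeness via the $\cs'$-limit, pointwise convergence of the band-limited pieces and Fatou at each integration level, and the independence of the partition of unity via the overlap relation together with Peetre maximal functions are all sound; for the maximal-function step note that the correct mixed-norm tool is Bagby's inequality for the \emph{iterated one-dimensional} maximal operators $M_n\cdots M_1$ (cf.\ \cite[Sec.~3.4]{JoSi08} and Theorem~\ref{inverse} of the paper), not an $n$-dimensional Fefferman--Stein estimate, which is unavailable for mixed norms. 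The density step also needs a short multiplier argument to see that the spatial cut-off converges in the $F$-quasi-norm, but that is routine.

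There is, however, one step that fails as written: the embedding $F^{s,\vec a}_{\vec p,q}\hookrightarrow\cs'$. Your bound is $|\langle u_j,\varphi\rangle|\le\|\,u_j\,|L_\infty\|\,\|\,\varphi\,|L_1\|$, and the Nikol'ski\u\i--Plancherel--Polya inequality gives $\|\,u_j\,|L_\infty\|\le c\,2^{j\vec a\cdot\frac1{\vec p}}\|\,u_j\,|L_{\vec p}\|\le c\,2^{j(\vec a\cdot\frac1{\vec p}-s)}\|\,u\,|F^{s,\vec a}_{\vec p,q}\|$. Whenever $s<\vec a\cdot\frac1{\vec p}$ (e.g.\ for every $s\le0$) this grows geometrically in $j$, and the rapid decrease of the fixed test function $\varphi$ contributes no decay in $j$ whatsoever, so $\sum_j|\langle u_j,\varphi\rangle|$ is not controlled. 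The missing idea is to localise the \emph{test function} in frequency: with $\widetilde\Phi_j:=\Phi_{j-1}+\Phi_j+\Phi_{j+1}$ one has $\widetilde\Phi_j\equiv1$ on $\supp\cf u_j$, whence $\langle u_j,\varphi\rangle=\langle u_j,\varphi^{(j)}\rangle$ for $\varphi^{(j)}:=\cf\bigl(\widetilde\Phi_j\,\cfi\varphi\bigr)$, and since $\cfi\varphi$ and its derivatives decay rapidly on the annulus $|\xi|_{\vec a}\sim2^j$, one gets $\|\,\varphi^{(j)}\,|L_1\|\le C_M\,2^{-jM}\,p_{M'}(\varphi)$ for every $M$ --- the very computation you already carry out for $\cs\hookrightarrow F^{s,\vec a}_{\vec p,q}$, now applied on the dual side. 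Choosing $M>\vec a\cdot\frac1{\vec p}-s$ makes the series converge and yields $|\langle u,\varphi\rangle|\le c\,\|\,u\,|F^{s,\vec a}_{\vec p,q}\|\,p_{M'}(\varphi)$, completing the embedding.
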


\begin{lem}[\cite{JoSi08}]\label{normalisation}
For $\lambda >0$ so large that $\lambda\vec{a}\geq 1$, the space
$F^{s,\vec a}_{\vec p,q} $ coincides with $F^{\lambda s, \lambda
  \vec{a}}_{\vec{p},q}$ and the corresponding quasi-norms are
equivalent. 
\end{lem}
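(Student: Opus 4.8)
The plan is to exploit the scaling behaviour of the anisotropic distance function and then to compare the two Littlewood--Paley decompositions directly. First I would record that the defining equation \eqref{tax-eq} yields, for every $\xi\neq0$, the identity $|\xi|_{\lambda\vec a}=|\xi|_{\vec a}^{1/\lambda}$: writing $r=|\xi|_{\vec a}$ so that $r^{-\vec a}\xi\in S^{n-1}$, the number $\tau$ with $\tau^{-\lambda\vec a}\xi\in S^{n-1}$ must satisfy $\tau^{\lambda}=r$. The hypothesis $\lambda\vec a\ge1$ guarantees that $\lambda\vec a$ is an admissible weight vector, so that $F^{\lambda s,\lambda\vec a}_{\vec p,q}$ is well defined, and the identity shows that the annulus carrying the $k$-th block of a $\lambda\vec a$-decomposition, namely $|\xi|_{\lambda\vec a}\sim2^{k}$, is exactly the $\vec a$-annulus $|\xi|_{\vec a}\sim2^{\lambda k}$. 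Hence the weight $2^{\lambda ks}$ attached to that block agrees, up to a fixed factor $2^{s(j-\lambda k)}$, with the weight $2^{js}$ of the $\vec a$-blocks having $j\approx\lambda k$.

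By Lemma~\ref{lem:contEmbed} each quasi-norm is independent of the chosen decomposition of unity, so I would fix the standard systems $\{\Phi_j\}$ for $\vec a$ and $\{\Psi_k\}$ for $\lambda\vec a$. Their supports overlap with bounded multiplicity: each $\Psi_k$ meets only those $\Phi_j$ with $j$ in an interval $I_k$ around $\lambda k$ of length $\lesssim\lambda$, while each $\Phi_j$ meets at most two of the $\Psi_k$. Writing $v_k=\cfi(\Psi_k\cf u)=(\cfi\Psi_k)*u=\sum_{j\in I_k}(\cfi\Psi_k)*u_j$ — and symmetrically $u_j=\sum_{k}(\cfi\Phi_j)*v_k$ with boundedly many terms — each contribution is the convolution of a block at scale $2^{j}\sim2^{\lambda k}$ with the fixed Schwartz kernel $\cfi\Psi_k$ (resp.\ $\cfi\Phi_j$) dilated to the same scale; it is therefore dominated pointwise by the anisotropic Peetre maximal function $u_j^{*}$ (resp.\ $v_k^{*}$) of that block.

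Finally I would pass to the $L_{\vec p}(\ell_q)$ quasi-norms. For the embedding $F^{s,\vec a}_{\vec p,q}\hookrightarrow F^{\lambda s,\lambda\vec a}_{\vec p,q}$ the quantity $2^{\lambda ks}|v_k|$ is controlled by $\sum_{j\in I_k}2^{js}u_j^{*}$; since $|I_k|\lesssim\lambda$ is uniformly bounded, the (quasi-)triangle inequality in $\ell_q$ regroups this into $\sum_j2^{jsq}\big(u_j^{*}\big)^q$ up to a constant depending only on $\lambda$ and $q$, and the reverse embedding follows in the same way from the two-term expansion of $u_j$. In both directions the anisotropic Fefferman--Stein / Nikol'ski\u\i--Plancherel--Polya maximal inequality for $L_{\vec p}(\ell_q)$ from \cite{JoSi07} removes the maximal functions $u_j^{*}$, $v_k^{*}$, leaving the respective quasi-norms; the cases $q=\infty$ or some $p_j=\infty$ are handled by the same estimates with suprema in place of sums. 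I expect the main obstacle to be precisely this change of resolution: because $\lambda\ne1$ roughly $\lambda$ consecutive $\vec a$-blocks are merged into a single $\lambda\vec a$-block, so one must check that the maximal function controlling a block at one dyadic scale may legitimately be estimated by those at the neighbouring scale, uniformly in $k$, which is where the decay of the dilated kernels and the maximal inequality do the essential work.
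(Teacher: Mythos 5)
Your proposal is correct, but note that the paper itself contains no proof of Lemma~\ref{normalisation}: it is imported from \cite{JoSi08}, where the argument runs along essentially the lines you chose --- the scaling identity $|\xi|_{\lambda\vec a}=|\xi|_{\vec a}^{1/\lambda}$, a bounded-overlap comparison of the two Littlewood--Paley systems, and Peetre--Fefferman--Stein maximal estimates --- so your route is the standard one rather than a genuinely different one. Three points would tighten your write-up. First, the inequality that removes the maximal functions must be the \emph{vector-valued} maximal inequality on $L_{\vec p}(\ell_q)$, which in the present paper is Theorem~\ref{inverse} (resting on Bagby's inequality, cf.\ \cite[Sec.~3.4]{JoSi08}), not the scalar Nikol'ski\u\i--Plancherel--Polya inequality of \cite[Prop.~4]{JoSi07}; to invoke it you must fix $\vec r$ with $r_l\min(p_1,\dots,p_n,q)>1$ beforehand, and apply it once with $\psi_0=\cfi\Phi_0$, $\psi=\cfi\Phi$ (these satisfy the Tauberian conditions \eqref{con1}--\eqref{con2}) and once with the anisotropy $\vec a$ replaced by $\lambda\vec a$, which is admissible precisely because $\lambda\vec a\geq 1$; this is where the hypothesis on $\lambda$ is genuinely used, beyond mere well-definedness. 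Second, since only $\lambda\vec a\geq 1$ is assumed, $\lambda$ may be as small as $1/\min(a_1,\dots,a_n)$, in which case each $\Phi_j$ meets $O(1+\lambda^{-1})$ of the $\Psi_k$, not ``at most two''; the multiplicity is still bounded by a constant depending on $\lambda$, so your regrouping via the $\ell_q$ quasi-triangle inequality survives unchanged. Third, the aside about ``some $p_j=\infty$'' is vacuous here, since Definition~\ref{F-defn} requires $0<\vec p<\infty$. With these adjustments your argument is complete and matches the cited proof in both structure and the estimates employed.
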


The lemma suggests to introduce a normalisation
for the vector $\vec{a}$, and often one has fixed the value of 
$|\vec{a}|$ in the literature. In this paper we just adopt the
flexible framework with $\vec{a}\geq 1$, though.

\begin{rem}
In Lemma~\ref{normalisation} the inequalities $\vec{a}\geq 1$ and $\lambda \vec a\ge 1$ are redundant.
In fact one can define $F^{s,\vec a}_{\vec p,q}$ for
arbitrary $\vec a>0$, as in \cite{JoSi08}. This gives another set-up
on $\Rn$, where \eqref{atriangle-ineq}, and hence \eqref{ax-ineq}, has to be changed, for then
\begin{equation}
\label{eq:trianglePower}
  |x+y|_{\vec a}^d\le |x|_{\vec a}^d+|y|_{\vec a}^d,\quad
  d=\min(1,a_1,\dots,a_n).
\end{equation}
The basic results on the $F^{s,\vec a}_{\vec p,q}$-scale can then be
derived similarly for $\vec a>0$; only a few constants need to be 
slightly changed because of \eqref{eq:trianglePower}. Thus one finds
e.g.\ Lemma~\ref{normalisation} for all $\lambda>0$,  cf.\ the end of Section~3 in \cite{JoSi08} 
(the details in \cite[Sec.~3]{JoSi08} only cover $\vec a\ge 1$, but are 
extended to all $\vec a>0$ as just indicated; in fact $\rho(x,y)=|x-y|_{\vec a}$ is then a
quasi-distance, a framework widely used by e.g.\ Stein~\cite{Ste93}). 
However, in view of this lemma, it is simplest henceforth just to
assume that $F^{s,\vec a}_{\vec p,q}$ is defined in terms of an anisotropy
$\vec a\ge 1$; which has been done throughout in the present paper.
\end{rem}

\subsection{Summation lemmas}
For later reference we give two minor results.

\begin{lem} \label{sum1-lem}
  When $(g_j)_{j\in\N_0}$ is a sequence of nonnegative measurable
  functions on $\Rn$ and $\delta>0$, then $G_j(x):=\sum_{k=0}^\infty
  2^{-\delta|j-k|}g_j(x)$ fulfils for $0<\vec p<\infty$,
  $0<q\le\infty$ that
  \begin{equation}
    \|\,G_j\, | L_{\vec p}(\ell_q)\| \le C_{\delta,q}     \|\,g_j\, | L_{\vec p}(\ell_q)\|,
  \end{equation}
whereby the constant is 
$C_{\delta,q}=(\sum_{k\in\Z} 2^{-\delta|k|\tilde q})^{1/\tilde q}$ for $\tilde q=\min(1,q)$.
\end{lem}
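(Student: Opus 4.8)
The plan is to reduce everything to a pointwise-in-$x$ inequality between the $\ell_q$-norms of the sequences $(G_j(x))_j$ and $(g_k(x))_k$, and then invoke the monotonicity of the mixed quasi-norm: if $0\le F\le C\,H$ holds pointwise for measurable $F,H\ge0$, then $\|\,F\,|L_{\vec p}\|\le C\,\|\,H\,|L_{\vec p}\|$ directly from the definition of the iterated integral. Applying this with $F(x)=(\sum_j G_j(x)^q)^{1/q}$ and $H(x)=(\sum_k g_k(x)^q)^{1/q}$, the asserted bound follows once the pointwise estimate $(\sum_j G_j(x)^q)^{1/q}\le C_{\delta,q}\,(\sum_k g_k(x)^q)^{1/q}$ is in hand. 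In this way the mixed-norm structure plays no active role, and the whole content reduces to a discrete convolution inequality for each fixed $x$. Writing $a_m:=2^{-\delta|m|}$ for $m\in\Z$ and setting $g_k:=0$ for $k<0$, one has $G_j(x)=\sum_{m\in\Z} a_m\,g_{j-m}(x)$.

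For $1\le q\le\infty$ I would exploit that $\ell_q$ is then a genuine norm. Regarding $(G_j(x))_j=\sum_{m\in\Z} a_m\,(g_{j-m}(x))_j$ as a superposition of shifted copies of $(g_k(x))_k$, Minkowski's inequality in $\ell_q$ together with the shift-invariance of the $\ell_q$-norm yields $\|(G_j(x))_j\,|\ell_q\|\le(\sum_{m\in\Z} a_m)\,\|(g_k(x))_k\,|\ell_q\|$; since $\tilde q=1$ in this range, the prefactor is exactly $C_{\delta,q}$. The endpoint $q=\infty$ is the same argument read with the supremum in place of the sum over $j$.

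For $0<q<1$ the space $\ell_q$ is only $q$-normed, so instead I would use the subadditivity of $t\mapsto t^q$, i.e.\ $(\sum_m b_m)^q\le\sum_m b_m^q$ for nonnegative terms. Raising the defining sum to the power $q$ gives $G_j(x)^q\le\sum_k 2^{-\delta|j-k|q}\,g_k(x)^q$, and after summing over $j$ and interchanging the order of summation (all terms being nonnegative) one obtains $\sum_j G_j(x)^q\le\big(\sum_{m\in\Z} 2^{-\delta|m|q}\big)\sum_k g_k(x)^q$. Taking $q$-th roots produces the pointwise estimate with constant $\big(\sum_{m\in\Z}2^{-\delta|m|q}\big)^{1/q}=C_{\delta,q}$, now matching $\tilde q=q$.

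Since the computation is elementary, I do not expect a genuine obstacle. The only points requiring a little care are to present the two regimes $q\ge1$ and $0<q<1$ so that they produce the single uniform constant $C_{\delta,q}$ with $\tilde q=\min(1,q)$, and to note that the series $\sum_{m\in\Z}2^{-\delta|m|\tilde q}$ converges precisely because $\delta>0$, so that $C_{\delta,q}<\infty$.
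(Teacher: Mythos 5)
Your proof is correct and takes essentially the same route as the paper, which settles the lemma in one line: pointwise application of Minkowski's inequality to the convolution in $\ell_q(\Z)$, with the regime $0<q<1$ absorbed into the exponent $\tilde q=\min(1,q)$ via the $q$-triangle inequality exactly as in your second case. You also read the summand correctly as $2^{-\delta|j-k|}g_k(x)$ (the $g_j$ in the statement is a typo), and your reduction to a fixed $x$ via monotonicity of the iterated mixed norm is precisely what the word ``pointwise'' means in the paper's sketch.
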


Like for the unmixed case in \cite[Lem.~2]{Ry99BPT}, the above lemma is obtained by pointwise
application of Minkowski's inequality to a convolution in $\ell_q(\Z)$.

\begin{lem} \label{sum2-lem}
  Let $(b_j)_{j\in\N_0}$ and $(d_j)_{j\in\N_0}$ be two sequences in $[0,\infty]$ and
  $0<r\le1$. If for some $j_0\ge 0$ there exists real numbers $C, N_0>0$ such that
  \begin{equation}
    d_j\le C2^{jN_0}\quad\text{for}\quad j\ge j_0,
\label{Ry0-ineq}
  \end{equation}
and if for every $N>0$ there exists a real number $C_N$ such that
  \begin{equation}
    d_j\le C_N\sum_{k=j}^\infty 2^{(j-k)N}b_k d_k^{1-r}, \quad\text{for}\quad j\ge j_0,
  \label{Ry1-ineq}
  \end{equation}
then the same constants $C_N$, $N>0$, fulfil that
  \begin{equation}
    d_j^r\le C_N\sum_{k=j}^\infty 2^{(j-k)Nr}b_k, 
  \quad\text{for}\quad j\ge j_0.
  \label{Ry2-ineq}
  \end{equation}
\end{lem}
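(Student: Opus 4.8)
The plan is to fix $N>0$ and pass to the scale-free variables $u_j:=2^{-jN}d_j$ and $c_k:=2^{-kNr}b_k$. Writing $\sigma_j:=\sum_{k\ge j}c_k$, which is nonincreasing in $j$, the hypothesis \eqref{Ry1-ineq} becomes
\[
 u_j\le C_N\sum_{k=j}^\infty c_k\,u_k^{1-r},\qquad j\ge j_0,
\]
while the desired \eqref{Ry2-ineq} turns into the equivalent bound $u_j^r\le C_N\,\sigma_j$. One may assume $\sigma_{j_0}<\infty$, for otherwise $\sigma_j\equiv\infty$ and \eqref{Ry2-ineq} is trivially true; similarly an index with $\sigma_j=0$ forces $u_j=0$ by the displayed inequality, so nothing is to prove there. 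In particular $c_k\le\sigma_{j_0}$, that is $b_k\le\sigma_{j_0}2^{kNr}$, a fact used below.

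Two elementary facts drive the argument. The first is the summation inequality $\sum_{k\ge j}c_k\sigma_k^{\theta}\le\sigma_j^{\theta+1}$, valid for every $\theta\ge0$; it follows by summing the termwise estimate $(\sigma_k-\sigma_{k+1})\sigma_k^{\theta}\le\sigma_k^{\theta+1}-\sigma_{k+1}^{\theta+1}$, a consequence of $\sigma_{k+1}\le\sigma_k$, whose right-hand side telescopes to $\sigma_j^{\theta+1}$. The second is a bootstrap on bounds of the form $u_k\le G\,\sigma_k^{\beta}$: inserting such a bound into the displayed inequality and applying the summation inequality with $\theta=(1-r)\beta$ gives $u_j\le C_N G^{1-r}\sigma_j^{1+(1-r)\beta}$. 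Hence a bound with exponent $\beta$ and constant $G$ is upgraded to one with $\beta':=1+(1-r)\beta$ and $G':=C_N G^{1-r}$. The recursion $\beta\mapsto\beta'$ converges to $1/r$, and $G\mapsto G'$ converges to $C_N^{1/r}$ (the latter seen in logarithmic variables, where it is affine with slope $1-r<1$), from any admissible start. Letting $m\to\infty$ in $u_j\le G_m\sigma_j^{\beta_m}$, for a fixed $j$ with $0<\sigma_j<\infty$, then yields exactly $u_j\le C_N^{1/r}\sigma_j^{1/r}$, i.e.\ \eqref{Ry2-ineq}. Notably, this mechanism is what reproduces the \emph{same} constant $C_N$ in the conclusion.

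What remains --- and what I expect to be the crux --- is to produce an admissible seed $u_k\le G_0\,\sigma_k^{\beta_0}$, and it is here that the growth condition \eqref{Ry0-ineq} is indispensable. The strategy is first to show that $u_k$ is actually \emph{bounded}, equivalently $d_k\le(\mathrm{const})\,2^{kN}$, by driving the a priori growth exponent down to $N$. Starting from $d_k\le C2^{kN_0}$, I would insert a bound $d_k\le E\,2^{k\gamma}$ into \eqref{Ry1-ineq} used with a \emph{large} exponent $M$ (legitimate, since \eqref{Ry1-ineq} holds for every parameter value), and combine it with $b_k\le\sigma_{j_0}2^{kNr}$. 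Provided $M>Nr+(1-r)\gamma$, the resulting geometric sum over $k$ converges and yields $d_j\le E'\,2^{j\gamma'}$ with $\gamma'=Nr+(1-r)\gamma$. As $\gamma\mapsto Nr+(1-r)\gamma$ contracts to $N$ while the accompanying constants remain bounded for one fixed large $M$, iteration gives $d_k\le(\mathrm{const})\,2^{kN}$, so $u_k$ is bounded.

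Finally, inserting this boundedness once more into the displayed inequality --- now with $u_k^{1-r}$ uniformly bounded --- produces $u_j\le(\mathrm{const})\,\sigma_j$, the seed with $\beta_0=1$; running the bootstrap from it completes the proof. The genuinely delicate point throughout is convergence of the various tail sums, which is precisely what the interplay of \eqref{Ry0-ineq} with the standing reduction $\sigma_{j_0}<\infty$ secures; granted that, the summation inequality and the bootstrap are routine.
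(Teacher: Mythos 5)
Your proof is correct in substance, but it takes a genuinely different route from the paper's. The paper introduces the maximal quantities $D_{j,N}=\sup_{k\ge j}2^{(j-k)N}d_k$ and observes that \eqref{Ry1-ineq} self-improves into $D_{j,N}\le C_N\bigl(\sum_{l\ge j}2^{(j-l)Nr}b_l\bigr)D_{j,N}^{1-r}$, whence \eqref{Ry2-ineq} follows in one stroke by division by $D_{j,N}^{1-r}$ --- with \eqref{Ry0-ineq} guaranteeing $D_{j,N}<\infty$ when $N\ge N_0$, and the case $N<N_0$ handled by a short downward bootstrap from the already proved case, under the standing reduction that the right-hand side of \eqref{Ry2-ineq} be finite. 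You avoid both the maximal quantity and the division trick: after renormalising to $u_j$, $c_k$, $\sigma_j$, you prove the telescoping inequality $\sum_{k\ge j}c_k\sigma_k^{\theta}\le\sigma_j^{\theta+1}$ and iterate polynomial bounds $u_k\le G\sigma_k^{\beta}$, with $\beta_m\to 1/r$ and $G_m\to C_N^{1/r}$ by contraction in logarithmic variables; your preliminary contraction of the growth exponent, $\gamma\mapsto Nr+(1-r)\gamma$ with one fixed large $M$ and with $b_k\le\sigma_{j_0}2^{kNr}$, plays exactly the role that finiteness of $D_{j,N}$ plays in the paper. Both arguments recover the \emph{same} constant $C_N$, as the lemma asserts; yours is longer but makes explicit the fixed-point mechanism that the division trick hides, while the paper's argument is essentially two lines once $D_{j,N}$ is introduced. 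I checked the delicate points: the termwise estimate $(\sigma_k-\sigma_{k+1})\sigma_k^{\theta}\le\sigma_k^{\theta+1}-\sigma_{k+1}^{\theta+1}$ is valid for $\theta\ge0$, the prefactors in the exponent-reduction stay uniformly bounded for $M>Nr+(1-r)N_0$, and the passage to the limit for fixed $j$ is legitimate since the exponents converge monotonically.

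One small repair is needed in your opening reduction. Since the lemma allows $b_k=\infty$, the implication that $\sigma_{j_0}=\infty$ forces $\sigma_j\equiv\infty$ can fail: a single infinite $c_k$ makes $\sigma_j$ jump from $\infty$ to a finite value as $j$ passes $k$. The fix is to replace $j_0$ by the least $j_1\ge j_0$ with $\sigma_{j_1}<\infty$ (if none exists, \eqref{Ry2-ineq} is trivial throughout): for $j_0\le j<j_1$ the right-hand side of \eqref{Ry2-ineq} is infinite, and for $j\ge j_1$ your entire argument runs verbatim with $j_1$ in place of $j_0$ --- which is legitimate precisely because the lemma, unlike Rychkov's original formulation, permits an arbitrary starting index $j_0$; this is also the same maneuver the paper uses at the end of Step~2 in the proof of Proposition~\ref{prop:pointwiseEstimate}.
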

\begin{proof}
  With $D_{j,N}=\sup_{k\ge j}2^{(j-k)N}d_k$ it follows from \eqref{Ry1-ineq} that for 
  $j\ge j_0$, $N>0$,
  \begin{equation}
    D_{j,N}\le \sup_{k\ge j} C_N\sum_{l\ge k} 2^{(j-l)N}b_ld_l^{1-r}
          \le  C_N(\sum_{l\ge j} 2^{(j-l)Nr}b_l)D_{j,N}^{1-r}.
\label{Ry3-ineq}
  \end{equation}
Clearly $D_{j_1,N}=0$ implies $d_j=0$ for $j\ge j_1$, so \eqref{Ry2-ineq} is trivial for
such $j$. We thus only need to consider the $D_{j,N}>0$.
Now \eqref{Ry0-ineq} yields that $D_{j,N}<\infty$ for all $j\ge j_0$ when $N\ge N_0$, so
then \eqref{Ry2-ineq} follows from \eqref{Ry3-ineq} by division by $D_{j,N}^{1-r}$.
 
Given any $N\in\,]0,N_0[\,$, we may in the just proved cases of \eqref{Ry2-ineq} decrease $N_0$ to
$N$, which gives a version of \eqref{Ry2-ineq} with $N$ in the exponent and the constant
$C_{N_0}$. Analogously to \eqref{Ry3-ineq}, one therefore finds from the definition of $D_{j,N}$ that
$D_{j,N}\le C_{N_0}^{1/r}(\sum_{l\ge j}2^{(j-l)Nr}b_l)^{1/r}$ for $j\ge j_0$. Here the right-hand
side may be assumed finite (as else \eqref{Ry2-ineq} is trivial for this $N$), whence we may
proceed as before by division in \eqref{Ry3-ineq}.
\end{proof}

\begin{rem}
  \label{Rych1-rem}
Lemma~\ref{sum2-lem} was essentially crystallised by Rychkov~{\cite[Lem.~3]{Ry99BPT}}, 
albeit with three unnecessary assumptions: 
$d_j<\infty$ (a consequence of \eqref{Ry0-ineq}), 
that $b_j,d_j>0$ and that $j_0=0$. 
For our proof of Proposition~\ref{prop:pointwiseEstimate} below, it is essential to consider 
$j_0>0$, and it would be cumbersome there to reduce to strict positivity of $b_j, d_j$.  
In \cite{Ry99BPT} no justification was given for this strictness in the application 
of \cite[Lem.~3]{Ry99BPT}, but this is remedied by Lemma~\ref{sum2-lem} above.
\end{rem}
\section{Some maximal inequalities} \label{max-sect}
In this section we obtain some maximal inequalities in the mixed-norm set-up. 
This part of the theory of the $F^{s,\vec a}_{\vec p,q}$-spaces is interesting in its own
right, and also important for the authors' work \cite{JoMHSi2}. Moreover, the methods are 
similar to those adopted in the set-up in Section~\ref{Rych-sect} below, but are rather cleaner
here. 

For distributions $u$ that for some $R>0$ and $j\in{\mathbb{N}}$ satisfy
\begin{equation}
 \supp \widehat{u}\subset \big\{ \xi\in\Rn \bigm| |\xi_k|\leq R\, 2^{j a_k}, k=1,\ldots,n \big\}
\label{spu-eq}
\end{equation}
the Peetre-Fefferman-Stein maximal function $u^* (x)$ is given by
\begin{equation}
  u^* (x)= \sup_{y \in \Rn}\, \frac{|u(y)|}{\prod\limits_{l=1}^n
  (1+ R\, 2^{ja_l} |x_l - y_l|)^{r_l}},\quad \vec{r}>0.
  \label{PFS-id}
\end{equation}
It obviously fulfils
\begin{equation}\label{eq:finitenessMaxFct}
|u(x)| \leq u^*(x) \leq \|\, u\, | L_\infty \|, \quad x\in\Rn.
\end{equation}
When $u$ in addition is in $L_{\vec{p}}$, the Nikol'ski\u\i--Plancherel--Polya inequality for
mixed norms, cf. \cite[Prop.~4]{JoSi07},  gives the finiteness of the right-hand side, hence
$u^*$ is finite everywhere. Thus, analogously to \cite[Sec.~2]{JJ11pe}, 
the maximal function is continuous.

To prepare for the theorem below, we first show the following pointwise estimate of
$u^*(x)$ by combining 
the proof ingredients from \cite[Prop.~2.2]{JJ11pe}, which the reader may consult for 
more details. Now their order is crucial:

\begin{prop}\label{prop:pointwiseEstMax}
When $0<\vec{q},\vec{r}\leq \infty$ then there is a constant $c_{\vec q,\vec r}$ such that
every $u\in\cs'$ fulfilling \eqref{spu-eq} also satisfies
\begin{equation}
  u^*(x) \leq c_{\vec{q},\vec{r}} 
  \left\| \frac{ u(x-R^{-1}\, 2^{-j\vec{a}}z) }{ \prod_{l=1}^n (1+|z_l|)^{r_l}}\, 
  \middle| L_{\vec{q}}\,(\R^n_z)\right\|
  \qquad\text{for $x\in\Rn$}.
\label{eq:generalProp22}
\end{equation}
\end{prop}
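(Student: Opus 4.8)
The plan is to reduce to a normalised band-limitation and then run an anisotropic, coordinate-wise version of the Fefferman--Stein--Peetre maximal argument, exploiting that the frequency box in \eqref{spu-eq} is a product set.

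First I would remove the parameters $R$ and $j$ by the dilation $v(\xi):=u(R^{-1}2^{-j\vec a}\xi)$. Under the substitution $z=R\,2^{j\vec a}(x-y)$ the denominators in \eqref{PFS-id} turn into $\prod_{l}(1+|z_l|)^{r_l}$, so both sides of \eqref{eq:generalProp22} are left invariant, while \eqref{spu-eq} becomes $\supp\widehat v\subset[-1,1]^n$. It therefore suffices to prove $v^*(\xi)\le c_{\vec q,\vec r}\,\|v(\xi-z)\prod_{l}(1+|z_l|)^{-r_l}\,|\,L_{\vec q}(\R^n_z)\|$ for every $v$ with $\supp\widehat v\subset[-1,1]^n$, where $v^*$ is the maximal function \eqref{PFS-id} with $R=1$, $j=0$. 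Since the box is a product, I would fix one $\psi_0\in\cs(\R)$ with $\widehat{\psi_0}=1$ on $[-1,1]$ and reproduce through the tensor kernel, i.e.\ $v=v*\prod_{l}\psi_0(\cdot_l)$.

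The analytic core is a one-dimensional Plancherel--Polya estimate: if $h\ge0$ satisfies $h\le|\psi_0|*h$ in one variable, then $\sup_{t}(1+|t|)^{-r}h(t)\le C_{q,r}\big(\int(1+|t|)^{-rq}h(t)^{q}\,dt\big)^{1/q}$ for all $r>0$ and $0<q\le\infty$. I would prove this by Peetre's inequality $(1+|t|)^{\pm r}\le(1+|t-w|)^{\pm r}(1+|w|)^{r}$, splitting $h=h^{1-q}h^{q}$ inside the reproducing integral and absorbing a factor of the maximal function itself by division (the self-improving step); the rapid decay of $\psi_0$ makes the remaining $w$-integral converge for every $r$. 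The essential point, and the reason the order of operations matters, is that the partial maximal function over the not-yet-processed variables inherits the hypothesis $h\le|\psi_0|*h$ in the active variable: reproducing $v$ only in that variable \emph{inside} the supremum (resp.\ inside the already formed $L_{(q_1,\dots,q_{k-1})}$-norm) and pulling $|\psi_0|$ out yields exactly this convolution inequality. Hence the one-dimensional estimate can be applied successively in the variables $1,2,\dots,n$, each step converting a supremum into the next $L_{q_k}$-layer and thus building up the nested norm on the right of \eqref{eq:generalProp22}.

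The main obstacle is finiteness when some $q_k<1$. Because $u$ is only a temperate distribution, Paley--Wiener--Schwartz gives $v$ merely polynomial growth, so $v^*(\xi)$ and the intermediate suprema need not be finite a priori, and the division in the self-improving step is then not justified. I would circumvent this exactly as Rychkov does in \cite{Ry99BPT}, carrying out the whole argument with a truncated maximal function (cutting $y$ off at radius $\nu$, or inserting a regulariser) for which every supremum is finite, obtaining the inequality with a constant independent of the truncation, and only then passing to the limit $\nu\to\infty$ by monotone convergence. A second delicate point is that for $q_k<1$ one cannot pull $|\psi_0|$ through the quasi-norm $L_{q_k}$-layer by Minkowski's integral inequality; this is precisely why the self-improving division, rather than a triangle inequality, is used at every stage, so that only the convolution inequality $h\le|\psi_0|*h$, valid for all $q_k$, is ever invoked.
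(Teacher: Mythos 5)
Your overall architecture --- rescale so the spectrum lies in $[-1,1]^n$, reproduce through a tensor kernel, and convert the weighted suprema into the nested $L_{\vec q}$-layers one variable at a time --- founders at a specific point: the propagation of the hypothesis $h\le|\psi_0|*h$ to the composite functions. At step $k$ your $h$ is the already-formed inner $L_{(q_1,\dots,q_{k-1})}$-quasi-norm of $v$, and to obtain $h\le|\psi_0|*h$ in the $k$-th variable you must pull the convolution out of those inner layers; that is Minkowski's integral inequality, which is false as soon as some $q_l<1$ with $l<k$. You correctly note that Minkowski fails below exponent one, but you then assert that the self-improving division makes it unnecessary; it does not. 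The division trick lives \emph{inside} the proof of the one-dimensional lemma, where it replaces the $q$-triangle inequality; it cannot manufacture the convolution-majorant hypothesis for the composite $h$, which is exactly what must be in hand \emph{before} the lemma can be applied in the next variable. So the induction stalls after the first coordinate with $q_l<1$. (It could be rescued by the known sub-mean-value estimate $|v|^{q}\le c\,K*|v|^{q}$ in the active variable, valid for band-limited $v$ and every $q>0$ with a rapidly decreasing kernel $K$, applied to $h^{q_1}$ and so on --- but that ingredient is absent from your argument.)

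The paper takes a genuinely different, absorption-free route, following \cite{JJ11pe}. It writes $u(y)$ as a single reproducing convolution \eqref{eq:uAsAnIntegral} with $\widehat\psi\equiv1$ on the spectral box, bounds $|u(y)|$ by the $L_1$-norm of the integrand, and then --- this is where the order is crucial --- \emph{first} applies the mixed-norm Nikol'ski\u\i--Plancherel--Polya inequality \cite[Prop.~4]{JoSi07} in the coordinates with $q_k<1$, which is legitimate because the product $\psi(R\,2^{j\vec a}(y-\cdot))\,u$ is band-limited in each variable, and only \emph{afterwards} H\"older's inequality with dual exponents in the coordinates with $q_k\ge1$, inserting the Peetre weights into the dual factor. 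Since $u^*$ never appears on the right-hand side, there is no division step and hence no a priori finiteness problem: the truncation machinery you import from \cite{Ry99BPT} is superfluous for this proposition (in the paper such absorption issues arise only in Proposition~\ref{prop:pointwiseEstimate}, where they are handled via Lemma~\ref{sum2-lem}).
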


\begin{proof}
Taking $\psi\in\cs(\Rn)$ with $\widehat{\psi} \equiv 1$ on $[-1,1]^n$ 
and such that $\supp \widehat{\psi} \subset [-2,2]^n$, 
we have $u=\cfi(\hat\psi(R^{-1}2^{-j\vec a}\cdot))*u$,  
which may be written with an integral since $u$ is $C^\infty$ with polynomial growth, i.e.\ 
\begin{equation}\label{eq:uAsAnIntegral}
  u(y)=\int\cdots\int R^n\, 2^{j|\vec{a}|}\, \psi(R\, 2^{j\vec{a}}(y-z))\, u(z)\, dz_1 \cdots dz_n .
\end{equation}
Now $\vec{q}=(q_<,q_\geq)$ is split into two groups $q_<$ and $q_\geq$ according to whether
$q_k<1$ or $q_k\geq 1$ holds. The groups may be interlaced, but for simplicity this is
ignored in the notation; the important thing is to treat the two groups separately. 

First \eqref{eq:uAsAnIntegral} is estimated by the norm of $L_1(\Rn)$, which then is controlled in
terms of the norm of $L_{(q_<,1_\geq)}$, whereby interlacing of the groups $q_<$ and $1_\geq$ is
unimportant: 
for fixed $y$, the spectrum of the integrand in \eqref{eq:uAsAnIntegral} is contained in 
$[-3R\, 2^{ja_1},3R\, 2^{ja_1}]\times\ldots\times[-3R\, 2^{ja_n},3R\, 2^{ja_n}]$, so 
the Nikol'ski\u\i--Plancherel--Polya inequality for mixed norms applies, 
cf. \cite[Prop.~4]{JoSi07}, which for $q_k<1$ gives an estimate by the $L_{q_k}$-norm with
respect to $z_k$,
\begin{equation}
  |u(y)| \leq c \prod_{q_k<1} (3R\, 2^{ja_k})^{\frac{1}{q_k}-1}\, 
  \big\| R^n2^{j|\vec{a}|} \psi(R2^{j\vec{a}}(y-\cdot))\, u\, \big| L_{(q_<,1_\geq)} \big\|.
\end{equation}
(The integration order in this norm is as stated in \eqref{eq:uAsAnIntegral}.)

Secondly, using H\"{o}lder's inequality in the variables where $q_k\geq 1$, and gathering their
dual exponents $q_k^*$ in $(q_\geq)^*$, gives for $x\in\Rn$ 
\begin{multline}
  \frac{|u(y)|}{\prod_l (1+R2^{ja_l}|x_l-y_l|)^{r_l}}
  \leq c \prod_{q_k<1} (3R 2^{ja_k})^{\frac{1}{q_k}-1}
  \Big\| \frac{R^n2^{j|\vec{a}|} u(z)}{\prod_l (1+R 2^{ja_l}|x_l-z_l|)^{r_l}} \, \Big| L_{\vec{q}} 
  \Big\|
\\
  \phantom{\leq}\;\times\Big\| \prod_l (1+R2^{ja_l}|y_l-z_l|)^{r_l} \psi(R 2^{j\vec{a}}(y-z))\, 
  \Big| L_{(\infty_<,(q_\geq)^*)} \Big\|.
\end{multline}
Since $\psi\in\cs$, a change of coordinates $z_k\mapsto R^{-1}\, 2^{-ja_k}z_k$ yields \eqref{eq:generalProp22} with the constant
$c_{\vec{q},\vec{r}} = c \prod_{q_k<1} 3^{\frac{1}{q_k}-1} \| \prod_{l=1}^n (1+|z_l|)^{r_l} \psi \, | L_{(\infty_<,(q_\geq)^*)} \|<\infty$.
\end{proof}

We now obtain an elementary proof of the \emph{mixed-norm} boundedness of $u^*$, by adapting the
proof of the isotropic $L_p$-result in \cite[Thm.~2.1]{JJ11pe}:

\begin{thm}\label{maxx}
Let $0<\vec{p}\leq \infty$ and suppose
\begin{equation}\label{eq-1}
  r_l > \frac{1}{\min (p_1, \ldots\, , p_l)}\, , \quad l = 1, \ldots, n .
\end{equation}
Then there exists a constant $c$ such that
\begin{equation}
\label{eq:maxxInequality}
  \| \, u^* \, | L_{\vec{p}}\| \le c \, \| \, u\, | L_{\vec{p}}\|
\end{equation}
holds for all $u \in L_{\vec{p}} \cap \cs' $ satisfying the spectral condition \eqref{spu-eq}.
\end{thm}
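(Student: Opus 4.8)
The plan is to combine the pointwise bound of Proposition~\ref{prop:pointwiseEstMax} with an iterated Hardy--Littlewood maximal inequality, applied one coordinate at a time, in the spirit of the isotropic argument of \cite{JJ11pe}. First I would invoke \eqref{eq:generalProp22} with a carefully chosen auxiliary exponent vector $\vec q=(q_1,\dots,q_n)$. The hypothesis \eqref{eq-1}, i.e.\ $r_l>1/\min(p_1,\dots,p_l)$, is precisely what permits picking, for each $l$,
\[
\frac{1}{r_l}<q_l<\min(p_1,\dots,p_l),\qquad l=1,\dots,n .
\]
The lower bound will make the one-dimensional weights integrable, while the upper bound leaves the room needed for the maximal inequalities below. (Indices with $p_l=\infty$ are handled by the usual modifications with essential suprema, and the pure $L_\infty$ bound is already contained in \eqref{eq:finitenessMaxFct}.)

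The one-variable building block is that for $0<q$ with $rq>1$, after the substitution $w=R^{-1}2^{-ja}z$ the weight $(1+|z|)^{-rq}$ turns into a dilate of the integrable, even, decreasing function $(1+|w|)^{-rq}$, whence
\[
\int_{\R}\frac{|v(t-R^{-1}2^{-ja}z)|^{q}}{(1+|z|)^{rq}}\,dz\le c\,(M|v|^{q})(t),
\]
with $M$ the one-dimensional Hardy--Littlewood maximal operator and $c$ independent of $j$, $R$ and $t$. Applying this successively in $z_1,z_2,\dots,z_n$ on the right-hand side of \eqref{eq:generalProp22}, each time pulling out the frozen factors $(1+|z_k|)^{-r_kq_l}$ with $k>l$ and using $r_lq_l>1$, yields the pointwise estimate $u^*(x)^{q_n}\le c\,H(x)$, where $H$ is the nested composition
\[
H(x)=M_n\Big[\big(M_{n-1}\big[\cdots(M_1|u|^{q_1})^{q_2/q_1}\cdots\big]\big)^{q_n/q_{n-1}}\Big](x),
\]
each $M_l$ acting in the single variable $x_l$.

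It then remains to take the $L_{\vec p}$-norm of $u^*$ and to unwind $H$ from the outside in; here I write $L_{\vec p/q}$ for the mixed-norm space with exponents $p_l/q$. Starting from $\|u^*|L_{\vec p}\|^{q_n}=\|(u^*)^{q_n}|L_{\vec p/q_n}\|$ and peeling off $M_n,M_{n-1},\dots,M_1$ in turn, at the stage of $M_l$ one freezes the outer variables $x_{l+1},\dots,x_n$ (handled by monotonicity of the remaining integrations), commutes $M_l$ past the inner integrations in $x_1,\dots,x_{l-1}$ by Minkowski's inequality (legitimate since $p_k/q_l\ge1$ for $k<l$, as $q_l<\min(p_1,\dots,p_{l-1})$), and then applies the scalar maximal inequality in $x_l$ on $L_{p_l/q_l}$, bounded because $q_l<p_l$. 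After each peeling the accumulated power is absorbed through $\|F^{q_{l+1}/q_l}|L_{\vec p/q_{l+1}}\|=\|F|L_{\vec p/q_l}\|^{q_{l+1}/q_l}$, so the estimate telescopes down to $c\,\||u|^{q_1}|L_{\vec p/q_1}\|^{q_n/q_1}=c\,\|u|L_{\vec p}\|^{q_n}$, which is \eqref{eq:maxxInequality}.

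I expect the main obstacle to be exactly this final assembling step: keeping the bookkeeping of exponents and powers straight through the $n$-fold nested composition and verifying at each stage that both the Minkowski commutation ($p_k/q_l\ge1$ for the inner variables) and the Hardy--Littlewood boundedness ($p_l/q_l>1$ for the active variable) are available simultaneously. It is here that the \emph{mixed} condition \eqref{eq-1}, rather than a single uniform lower bound on the $r_l$, becomes indispensable, since the admissible window $\big(1/r_l,\min(p_1,\dots,p_l)\big)$ for $q_l$ is nonempty only under that hypothesis.
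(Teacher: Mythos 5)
Your proof is correct, but after the common starting point it takes a genuinely different route from the paper. The paper's own proof also begins with Proposition~\ref{prop:pointwiseEstMax}, but then remains entirely elementary: it applies \eqref{eq:generalProp22} with the \emph{endpoint} choice $q_k=\min(p_1,\ldots,p_k)$ and computes the $L_{p_1},\dots,L_{p_n}$-norms successively, commuting each $L_{p_j}(dx_j)$-norm past the $L_{q_k}(dz_k)$-norms for $k\ge j$ by the generalised Minkowski inequality (admissible exactly because $p_j\ge q_k$ for $k\ge j$) and using translation invariance of Lebesgue measure; the $z$-integrations then decouple into the factors $\|(1+|z_l|)^{-r_l}\,|L_{q_l}\|$, whose finiteness is precisely \eqref{eq-1} --- no maximal function appears at all, which is the point of the adaptation from \cite[Thm.~2.1]{JJ11pe} announced before the theorem. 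You instead dominate the $z$-averages by an $n$-fold nested composition of one-dimensional Hardy--Littlewood operators via the majorant property (cf.~\cite[p.~57]{Ste93}) and peel them off in mixed norm; this is sound --- the dilated kernels have $j$- and $R$-independent $L_1$-norms, the Minkowski commutation needs $p_k/q_l\ge 1$ only for the inner variables $k<l$, the outer variables are handled by mere monotonicity, and the power-absorption identity $\|F^{q_{l+1}/q_l}|L_{\vec p/q_{l+1}}\|=\|F|L_{\vec p/q_l}\|^{q_{l+1}/q_l}$ telescopes correctly to $\|u|L_{\vec p}\|^{q_n}$ --- but it forces the strictly open window $1/r_l<q_l<\min(p_1,\ldots,p_l)$, since $M_l$ is unbounded on $L_1$ in the active variable, whereas the paper's argument tolerates the endpoint $q_l=\min(p_1,\ldots,p_l)$ because it only needs integrability $r_lq_l>1$. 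What each approach buys: the paper's proof is shorter, avoids maximal-function theory and yields an explicit constant; yours is essentially the scalar instance of the strategy the paper itself deploys later for Theorem~\ref{inverse} (majorant property, iterated $M_l$, Bagby's inequality \cite{Bag75}), and it would extend with little extra effort to vector-valued $L_{\vec p}(\ell_q)$ settings, which the direct Minkowski argument does not immediately give.
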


\begin{proof}
We use \eqref{eq:generalProp22} with $q_k=\min(p_1,\ldots,p_k)$ for $k=1,\ldots,n$ and calculate the $L_{p_j}$-norms successively on both sides. 
Since $p_j\geq q_k$ for all $k\geq j$, we may apply the generalised Minkowski inequality
$n-(j-1)$ times, as well as the  
translation invariance of $dx_1,\ldots,dx_n$, which gives
\begin{equation}
  \|\, u^*\, | L_{\vec{p}} \| \leq c_{\vec{q},\vec{r}} 
  \big(\prod_{l=1}^n \| (1+|z_l|)^{-r_l}\, | L_{q_l}\|\big )
  \|\, u\, | L_{\vec{p}}\|.
\end{equation}
Here \eqref{eq-1} yields the finiteness of the $L_{q_l}$-norms.
\end{proof}

The following result is convenient for certain convolution estimates.
Since the embedding $B^{s,\vec a}_{\vec p,q}(\Rn)\hookrightarrow C^0(\Rn)\cap L_\infty(\Rn)$
holds for $s>\vec a\cdot\frac1{\vec p}$, 
or for $s=\vec a\cdot\frac1{\vec p}$ if $q\le1$, it is a result 
pertaining to continuous functions.

\begin{cor}\label{osz}
If $C>0$ and $\vec{r}$ fulfils \eqref{eq-1}, 
$d=\min (1,p_1,\ldots ,p_n)$ yields
\begin{equation}
\label{eq:supBall}
\Big\|\, \sup_{|x-y| < C} |u(y)| \, \Big| L_{\vec{p}}(\R^n_x)\Big\|\le c \, 
\|\, u\, | B^{s,\vec{a}}_{\vec{p},d}\|
\qquad\text{for $s=\vec{a}\cdot\vec{r}$}.
\end{equation}
\end{cor}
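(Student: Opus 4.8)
The plan is to transfer the estimate to the Littlewood--Paley pieces $u_j=\cfi(\Phi_j\cf u)$ defined through \eqref{unity} and then to call upon the maximal inequality of Theorem~\ref{maxx}. First I would observe that the hypothesis \eqref{eq-1} forces $r_l>1/p_l$ (since $\min(p_1,\dots,p_l)\le p_l$), so that $s=\vec a\cdot\vec r=\sum_l a_lr_l>\sum_l a_l/p_l=\vec a\cdot\tfrac1{\vec p}$. Hence the embedding $B^{s,\vec a}_{\vec p,d}\hookrightarrow C^0\cap L_\infty$ recalled just above the corollary applies, so $u$ has a continuous bounded representative, the series $u=\sum_{j\ge0}u_j$ converges uniformly, and the supremum in \eqref{eq:supBall} is a genuine pointwise supremum of a continuous function. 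In particular $|u(y)|\le\sum_{j=0}^\infty|u_j(y)|$, whence $\sup_{|x-y|<C}|u(y)|\le\sum_{j=0}^\infty\sup_{|x-y|<C}|u_j(y)|$.

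Next I would record the spectral support. From the construction of $\Phi_j$ one has $\supp\cf u_j\subset\{\xi: |\xi|_{\vec a}\le 3\cdot 2^{j-1}\}$, and by \eqref{ax-ineq} this is contained in a box $\{\xi:|\xi_k|\le R\,2^{ja_k},\ k=1,\dots,n\}$ for a fixed $R$ independent of $j$. Thus each $u_j$ satisfies the spectral condition \eqref{spu-eq}, and its Peetre--Fefferman--Stein maximal function $u_j^*$ of \eqref{PFS-id} is defined with this $R$. The crux of the argument is then to bound the local supremum by $u_j^*$: for $|x-y|<C$ we have $|x_l-y_l|<C$, so each factor in the denominator of \eqref{PFS-id} satisfies $1+R\,2^{ja_l}|x_l-y_l|\le(1+RC)\,2^{ja_l}$ because $2^{ja_l}\ge1$. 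Multiplying over $l$ bounds the full weight by $c\,2^{j\sum_l a_lr_l}=c\,2^{js}$, which is precisely where the assumption $s=\vec a\cdot\vec r$ is exploited, and yields $\sup_{|x-y|<C}|u_j(y)|\le c\,2^{js}u_j^*(x)$.

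Finally I would assemble the pieces. Combining the two displays gives the pointwise bound $\sup_{|x-y|<C}|u(y)|\le c\sum_{j\ge0}2^{js}u_j^*(x)$. Taking the $L_{\vec p}$ quasi-norm and using that $L_{\vec p}$ is $d$-normed with $d=\min(1,p_1,\dots,p_n)$, I get $\big\|\sup_{|x-y|<C}|u(y)|\,\big|L_{\vec p}\big\|^d\le c^d\sum_{j\ge0}2^{jsd}\|u_j^*\,|L_{\vec p}\|^d$. Now \eqref{eq-1} is exactly the hypothesis of Theorem~\ref{maxx}, so \eqref{eq:maxxInequality} applies to each band-limited $u_j\in L_{\vec p}$ and gives $\|u_j^*\,|L_{\vec p}\|\le c\,\|u_j\,|L_{\vec p}\|$. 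Substituting this and recognising $\sum_{j\ge0}2^{jsd}\|u_j\,|L_{\vec p}\|^d=\|u\,|B^{s,\vec a}_{\vec p,d}\|^d$ finishes the proof upon taking $d$-th roots. The step I expect to require the most care is not any individual estimate but the well-definedness at the start: justifying that the supremum over the Euclidean ball is realised by a continuous representative and that term-by-term summation over $j$ is legitimate, which rests entirely on $s>\vec a\cdot\tfrac1{\vec p}$ and the quoted embedding into $C^0\cap L_\infty$; once this is secured the remaining combination of the weight bound, the $d$-triangle inequality and Theorem~\ref{maxx} is routine.
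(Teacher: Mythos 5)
Your proof is correct and takes essentially the same route as the paper: Littlewood--Paley decomposition of $u$, the pointwise bound $\sup_{|x-y|<C}|u_j(y)|\le c\,2^{j\vec a\cdot\vec r}\,u_j^*(x)$ extracted from the product weight in \eqref{PFS-id}, then $d$-subadditivity of $\|\cdot\,|L_{\vec p}\|^d$ and Theorem~\ref{maxx} applied bandwise, summing into the $B^{s,\vec a}_{\vec p,d}$-quasi-norm. Your extra opening step, securing the continuous representative through the embedding $B^{s,\vec a}_{\vec p,d}\hookrightarrow C^0\cap L_\infty$, is exactly what the paper records in the sentence preceding the corollary.
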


\begin{proof}
Since $\|\cdot |L_{\vec p}\|^d$ is subadditive,
simple arguments yield
\begin{equation}
\begin{split}
  \Big\|\, \sup_{|x-y| < C} |u(y)| \, \Big| L_{\vec{p}}(\Rn_x) \Big\|^d & \le 
  \Big\|\, \sup_{|x-y| < C} \sum_{j=0}^\infty \, |u_j(y)| \, \Big|\, L_{\vec{p}}(\Rn_x)\Big\|^d 
  \\
  & \le  \sum_{j=0}^\infty \, 
  \prod_{\ell =1}^n (1+C\, 2^{ja_\ell})^{d\, r_\ell}
  \| \, u_j^*
   \, | L_{\vec{p}}\|^d.
\end{split}
\end{equation}
Since $\prod_{\ell=1}^n (1+C\, 2^{ja_\ell})^{d\, r_\ell} \leq (1+C)^{d\, |\vec{r}|}\, 2^{jd\, \vec{a}\cdot\vec{r}}$,
the right-hand side is seen to be less than  
$c\, \|\, u \, |\, B^{s,\vec{a}}_{\vec{p},d}\|^d$ for $s=\vec a\cdot\vec r$
by application of Theorem~\ref{maxx}.
\end{proof}

\begin{rem}\label{rem:lpLTEstimate}
In \cite{JoMHSi2}  Corollary~\ref{osz} enters our estimates 
for certain functions $u\in F^{s,\vec a}_{\vec p,q}$ with
$s>\sum_{\ell=1}^n \frac{a_\ell}{\min (p_1, \ldots\, , p_\ell)}$.
Then one can pick $\vec{r}$ satisfying \eqref{eq-1} and such that $\vec{a}\cdot\vec{r}<s$,
hence elementary embeddings yield
$\|\, \sup_{|x-y| < C} |u(y)| \, | L_{\vec{p}}(\R^n_x)\|\le c \, 
\|\, u \, |  F^{s,\vec{a}}_{\vec{p},q}\|$.
\end{rem}

\section{Rychkov's inequalities}
\label{Rych-sect}

In the systematic theory of the $F^{s,\vec a}_{\vec p,q}$-spaces, it is of course important
to dispense from the requirement in Definition~\ref{F-defn} that the Schwartz functions $\Phi_j$
have compact support. In so doing, we shall largely follow Rychkov's treatment of the isotropic case
\cite{Ry99BPT}. 

In the following $\vec{a}=(a_1, \ldots, a_n)$ is a fixed anisotropy with $\vec{a}\geq 1$; we set
\begin{equation}
  a_0 = \min (a_1, \ldots ,a_n)  .
\end{equation}
Throughout this section we consider $\psi_0, \psi \in \cs (\Rn)$ that 
fulfil Tauberian conditions in terms of some $\varepsilon >0$ and/or a moment 
condition of order $M_\psi$,
\begin{alignat}{2}
\label{con1}
|{\mathcal{F}} \psi_0 (\xi) | &> 0& \quad &\mbox{on }\{\xi \, |\,  |\xi|_{\vec{a}}< 2 \varepsilon\},
\\
\label{con2}
|{\mathcal{F}} \psi (\xi) | &> 0& \quad &\mbox{on }\Big\{\xi \, \Big|\,  
\frac{\varepsilon}{2}<|\xi|_{\vec{a}}< 2 \varepsilon\Big\} ,\\
\label{moment}
D^\alpha ({\mathcal{F}} \psi) (0)&= 0& \quad &\mbox{for }|\alpha|\le M_\psi.
\end{alignat}
Hereby $M_\psi\in{\mathbb{N}}_0$, or we take $M_\psi=-1$ when the condition \eqref{moment} is void.
Note that if \eqref{con1} is verified for the Euclidean distance, it holds true also 
in the anisotropic case, perhaps with a different $\varepsilon$; cf.~\eqref{ax-ineq}.

In this section we also change notation by setting 
\begin{equation}\label{eq:defOfSubscript_j}
  \varphi_j(x) = 2^{j|\vec a|}\varphi(2^{j\vec a}x),\quad \varphi\in\cs,\enskip j\in\N.
\end{equation} 
For $\psi_0$ this gives rise to the sequence $\psi_{0,j}(x):=2^{j|\vec a|} \psi_0(2^{j\vec a}x)$,
but we shall mainly deal with $(\psi_j)_{j\in\N_0}$ that mixes
$\psi_0$ and $\psi$. Note that $\psi_0=\psi_{0,0}$.  

To elucidate the Tauberian conditions, we recall in the lemma below a well-known fact on Calder\'{o}n's
reproducing formula:
\begin{equation}\label{eq:calderonRP}
  u = \sum_{j=0}^\infty \lambda_j * \psi_j * u,
  \qquad\text{for $u\in\cs'(\Rn)$}.
\end{equation}

\begin{lem}\label{lem:calderonRP}
When $\psi_0,\psi\in\cs(\Rn)$ fulfil the Tauberian conditions
\eqref{con1}--\eqref{con2} there exist $\lambda_0,\lambda\in\cs(\Rn)$ 
fulfilling \eqref{eq:calderonRP} for every $u\in\cs'(\Rn)$.
Moreover, it can be arranged that $\widehat\lambda_0$ and
$\widehat\lambda$ are supported by the sets in 
\eqref{con1}, respectively \eqref{con2}. 
\end{lem}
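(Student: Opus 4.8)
The plan is to reduce the identity \eqref{eq:calderonRP} to a dyadic partition of unity on the Fourier side and then transfer it to $\cs'$ by a multiplier argument. Concretely, I would seek $\lambda_0,\lambda\in\cs$ for which
\[
  \widehat{\lambda_0}(\xi)\,\cf\psi_0(\xi)+\sum_{j=1}^\infty \widehat\lambda(2^{-j\vec a}\xi)\,\cf\psi(2^{-j\vec a}\xi)=1\qquad(\xi\in\Rn),
\]
since by \eqref{eq:defOfSubscript_j} the left-hand side equals $\sum_{j=0}^\infty \cf\lambda_j(\xi)\,\cf\psi_j(\xi)$, i.e.\ the Fourier transform of the right-hand side of \eqref{eq:calderonRP} applied to $u$.

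First I would manufacture a radial partition of unity adapted to the annulus in \eqref{con2}. Picking a nonnegative $\chi\in C_0^\infty$ with $\supp\chi\subset\{\varepsilon/2<|\xi|_{\vec a}<2\varepsilon\}$ and $\chi>0$ on $\{3\varepsilon/4\le|\xi|_{\vec a}\le 3\varepsilon/2\}$, the function $\sigma(\xi)=\sum_{k\in\Z}\chi(2^{-k\vec a}\xi)$ is a locally finite sum, smooth and strictly positive on $\Rn\setminus\{0\}$, and satisfies $\sigma(2^{\vec a}\xi)=\sigma(\xi)$; hence $\eta:=\chi/\sigma$ obeys $\sum_{k\in\Z}\eta(2^{-k\vec a}\xi)=1$ for $\xi\ne0$, with $\supp\eta$ still inside the set of \eqref{con2}. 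Splitting this sum at $k=0$, I set $\mu(\xi):=\sum_{k\le0}\eta(2^{-k\vec a}\xi)$, which is supported in $\{|\xi|_{\vec a}<2\varepsilon\}$, equals $1$ near the origin, and is smooth. Because $\cf\psi$ is nonzero on $\supp\eta$ by \eqref{con2} and $\cf\psi_0$ is nonzero on $\supp\mu$ by \eqref{con1}, the functions $\widehat\lambda:=\eta/\cf\psi$ and $\widehat{\lambda_0}:=\mu/\cf\psi_0$ are smooth with compact support in the sets of \eqref{con2} and \eqref{con1}, so $\lambda,\lambda_0\in\cs$ and the two support assertions hold. By construction $\widehat{\lambda_0}\cf\psi_0=\mu$ and $\widehat\lambda(2^{-j\vec a}\cdot)\cf\psi(2^{-j\vec a}\cdot)=\eta(2^{-j\vec a}\cdot)$, so the displayed identity is exactly $\mu(\xi)+\sum_{j\ge1}\eta(2^{-j\vec a}\xi)=1$.

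The main work lies in passing from this pointwise identity to convergence in $\cs'$. Writing $g_N(\xi)=\mu(\xi)+\sum_{j=1}^N\eta(2^{-j\vec a}\xi)$, I would verify that $g_N\to1$ as a sequence of Fourier multipliers, i.e.\ that $g_N\phi\to\phi$ in $\cs$ for every $\phi\in\cs$. Two facts make this routine: the dilation factors $2^{-ja_l}\le1$ force the derivatives of every summand, and hence of $g_N$, to be bounded uniformly in $N$, with only boundedly many overlapping terms at each point; and $1-g_N=\sum_{j\ge N+1}\eta(2^{-j\vec a}\cdot)$ is supported in the region $|\xi|_{\vec a}>2^N\varepsilon$ receding to infinity, where $\phi$ and all its derivatives decay faster than any power. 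A Leibniz estimate then gives $p_{\alpha,\beta}((1-g_N)\phi)\to0$ for all $\alpha,\beta$.

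Consequently $\langle g_N\widehat u,\phi\rangle=\langle\widehat u,g_N\phi\rangle\to\langle\widehat u,\phi\rangle$ for every $\phi\in\cs$, so $g_N\widehat u\to\widehat u$ in $\cs'$; applying the continuous inverse Fourier transformation and the relation $\cfi(\widehat{\lambda_j}\,\widehat{\psi_j}\,\widehat u)=\lambda_j*\psi_j*u$ yields $\sum_{j=0}^N\lambda_j*\psi_j*u\to u$ in $\cs'$, which is \eqref{eq:calderonRP}. I expect the uniform multiplier bounds together with the receding-support decay estimate to be the only genuinely delicate point; the algebraic construction of $\lambda_0$ and $\lambda$ is standard once the radial partition $\eta$ is in place.
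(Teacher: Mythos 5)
Your proposal is correct and takes essentially the same route as the paper: both reduce \eqref{eq:calderonRP} by Fourier transformation to the identity \eqref{eq:identityLambda} and then define $\widehat{\lambda_0},\widehat{\lambda}$ as quotients of a Littlewood--Paley system by $\widehat{\psi_0},\widehat{\psi}$; the paper merely builds its system more directly by telescoping, taking $\widehat{\lambda_0}=h/\widehat{\psi_0}$ and $\widehat{\lambda}=\bigl(h-h(2^{\vec a}\cdot)\bigr)/\widehat{\psi}$ for a single cutoff $h$, where you normalise annular bumps via $\eta=\chi/\sigma$ and, as a bonus, spell out the $\cs'$-convergence that the paper leaves implicit. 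One cosmetic repair: since $\eta(0)=0$, your $\mu(\xi)=\sum_{k\le 0}\eta(2^{-k\vec a}\xi)$ vanishes at $\xi=0$, so to obtain a smooth function equal to $1$ near the origin you should instead set $\mu:=1-\sum_{k\ge 1}\eta(2^{-k\vec a}\cdot)$, which is a locally finite sum on all of $\Rn$.
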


\begin{proof}
By Fourier transformation \eqref{eq:calderonRP} is carried over to
\begin{equation}\label{eq:identityLambda}
\cf \lambda_0 (\xi) \, \cf \psi_0 (\xi) + \sum_{j=1}^\infty
\cf \lambda (2^{-j\vec{a}}\xi) \, \cf \psi (2^{-j\vec{a}}\xi) =1 ,
\quad \xi \in \Rn .
\end{equation}
To find $\lambda_0,\lambda$ reduces to a Littlewood-Paley construction:
taking $h\in C_0^\infty$
such that $0\leq h\leq 1$ on $\Rn$, 
$\supp h\subset\{\xi\, |\, |\xi|_{\vec{a}}< 2\varepsilon\}$ and 
$h(\xi)=1$ if $|\xi|_{\vec{a}}\leq \frac{3}{2}\varepsilon$, then 
$\widehat{\lambda_0}=h/\widehat{\psi_0}$
and 
$\widehat{\lambda} = \big(
h-h(2^{\vec{a}}\cdot)\big)/\widehat{\psi}$ 
fulfil \eqref{eq:identityLambda} and the support inclusions.
\end{proof}
A general reference to Calderon's formula could be \cite[Ch.~6]{FJW91}.
More refined versions have been introduced by Rychkov~\cite{Ryc01}.

To comment on the moment condition, we use
for $M \ge -1$ the subspace
\begin{equation}\label{eq:momentConditionSM}
\cs_M := \Big\{\mu \in \cs(\Rn) \, \Big| \, D^\alpha (\cf \mu)(0)=0 \quad \mbox{for all}\quad
|\alpha| \le M \Big\} .
\end{equation}
It is recalled that in addition to the $p_{\alpha,\beta}$ in
\eqref{eq:seminormsSchwartz} also the following family of  
seminorms induces the topology on $\cs$:
\begin{equation}\label{eq:seminormsS_Nalpha}
q_{N,\alpha}(\psi) := \int_{\R^n} \langle x\rangle^N |D^\alpha\psi(x)|\, dx,
\quad N\in\N_0,\enskip \alpha\in\N_0^n.
\end{equation}
This is convenient for the fact that moment conditions,
also in case of the anisotropic dilation $t^{\vec a}$,
induce a rate of convergence to 0 in $\cs$:

\begin{lem}\label{Bui}
For $\alpha,\beta\in\N_0^n$ there is an estimate for $0<t\leq 1, \nu\in\cs$ and $\mu\in\cs_M$, 
\begin{equation}
  p_{\alpha,\beta}(t^{-|\vec{a}|}\mu(t^{-\vec{a}}\, \cdot) *\nu) \leq C_{\alpha}\, t^{(M+1)a_0} \,
  \max p_{0,\zeta}(\widehat{\mu})\cdot q_{M+1,\gamma}(\widehat{D^\beta \nu}),
\end{equation}
where the maximum is over all $\zeta$ with $|\zeta|\leq M+1$ or $\zeta\leq \alpha$; and over
$\gamma\leq\alpha$. 
\end{lem}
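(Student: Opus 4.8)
The plan is to pass to the Fourier side, where both the moment condition on $\mu$ and the anisotropic dilation act transparently. First I would move the derivative onto $\nu$, using $D^\beta(t^{-|\vec a|}\mu(t^{-\vec a}\cdot)*\nu)=t^{-|\vec a|}\mu(t^{-\vec a}\cdot)*D^\beta\nu$, so that it suffices to estimate $\sup_x|x^\alpha g_t(x)|$ for $g_t:=t^{-|\vec a|}\mu(t^{-\vec a}\cdot)*D^\beta\nu$. A change of variables shows $\cf[t^{-|\vec a|}\mu(t^{-\vec a}\cdot)](\xi)=\widehat\mu(t^{\vec a}\xi)$, whence $\widehat{g_t}(\xi)=\widehat\mu(t^{\vec a}\xi)\,\widehat{D^\beta\nu}(\xi)$. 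Bounding the supremum by the $L_1$-norm of the Fourier transform then reduces the task to
\[
  \sup_x|x^\alpha g_t(x)|\le c\int_{\Rn}\big|(-D_\xi)^\alpha\big[\widehat\mu(t^{\vec a}\xi)\,\widehat{D^\beta\nu}(\xi)\big]\big|\,d\xi .
\]

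Next I would expand by the Leibniz rule into a finite sum over $\alpha'\le\alpha$ of products $(-D_\xi)^{\alpha'}[\widehat\mu(t^{\vec a}\xi)]\cdot(-D_\xi)^{\alpha-\alpha'}\widehat{D^\beta\nu}(\xi)$, and apply the chain rule to the first factor, which gives $(-D_\xi)^{\alpha'}[\widehat\mu(t^{\vec a}\xi)]=\pm\,t^{\vec a\cdot\alpha'}(D^{\alpha'}\widehat\mu)(t^{\vec a}\xi)$, with $\vec a\cdot\alpha':=\sum_l a_l\alpha'_l$. The second factor, integrated in $\xi$ against the weight $\langle\xi\rangle^{M+1}$, produces the seminorms $q_{M+1,\gamma}(\widehat{D^\beta\nu})$ with $\gamma=\alpha-\alpha'\le\alpha$. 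So the whole point is to extract the decay $t^{(M+1)a_0}$ together with a bounded multiple of $p_{0,\zeta}(\widehat\mu)$ from the factor $t^{\vec a\cdot\alpha'}(D^{\alpha'}\widehat\mu)(t^{\vec a}\xi)$, uniformly in $\alpha'\le\alpha$.

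The core of the argument, and the step I expect to be the main obstacle, is this extraction, which I would carry out by splitting according to the size of $\alpha'$ and repeatedly using $t\le 1$, $\vec a\ge 1$ (so that $t^{\vec a\cdot\alpha'}\le t^{a_0|\alpha'|}$ and $|t^{\vec a}\xi|\le t^{a_0}|\xi|$). When $|\alpha'|\ge M+1$ there is nothing to gain from $\mu$: I bound $|(D^{\alpha'}\widehat\mu)(t^{\vec a}\xi)|\le p_{0,\alpha'}(\widehat\mu)$ with $\alpha'\le\alpha$, while $t^{\vec a\cdot\alpha'}\le t^{a_0|\alpha'|}\le t^{(M+1)a_0}$. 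When $|\alpha'|\le M$, the moment condition $\mu\in\cs_M$ forces $D^{\delta+\alpha'}\widehat\mu(0)=0$ for $|\delta+\alpha'|\le M$, i.e.\ $D^{\alpha'}\widehat\mu$ vanishes to order $M+1-|\alpha'|$ at the origin, so Taylor's formula gives
\[
  |(D^{\alpha'}\widehat\mu)(\eta)|\le c\,|\eta|^{M+1-|\alpha'|}\max_{|\zeta|=M+1}p_{0,\zeta}(\widehat\mu).
\]
Evaluating at $\eta=t^{\vec a}\xi$, the prefactor becomes $t^{\vec a\cdot\alpha'}|t^{\vec a}\xi|^{M+1-|\alpha'|}\le t^{a_0|\alpha'|}\,t^{a_0(M+1-|\alpha'|)}|\xi|^{M+1-|\alpha'|}=t^{(M+1)a_0}|\xi|^{M+1-|\alpha'|}$, and the residual power $|\xi|^{M+1-|\alpha'|}\le\langle\xi\rangle^{M+1}$ is exactly what is absorbed into $q_{M+1,\alpha-\alpha'}(\widehat{D^\beta\nu})$. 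Collecting the two cases yields the asserted bound, with $C_\alpha$ depending only on $\alpha$ through the binomial and Taylor constants, the maximum over $\zeta$ ranging over $|\zeta|\le M+1$ (from the first case) or $\zeta\le\alpha$ (from the second), and over $\gamma\le\alpha$.
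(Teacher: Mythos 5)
Your proposal is correct and follows essentially the same route as the paper's proof: pass to the Fourier side via $\cfi\colon L_1\to L_\infty$, expand $D_\xi^\alpha\big[\widehat\mu(t^{\vec a}\xi)\,\widehat{D^\beta\nu}(\xi)\big]$ by the Leibniz rule, and split according to whether the order of the derivative falling on $\widehat\mu$ exceeds $M$, using the moment condition and a Taylor expansion (with remainder of the appropriate order) in the low-order case and the crude bound $t^{\vec a\cdot\alpha'}\le t^{a_0|\alpha'|}\le t^{(M+1)a_0}$ in the high-order case. The only cosmetic difference is that you commute $D^\beta$ through the convolution at the outset, whereas the paper keeps $\widehat{D^\beta\nu}$ inside the integral from the start; the estimates are identical.
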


\begin{proof}
The continuity of $\cfi=(2\pi)^{-n} \overline{\cf}: L_1\to L_\infty$ and Leibniz' rule give that
\begin{gather}
\begin{split}
\label{eq:seminormsConvolution}
    p_{\alpha,\beta}(t^{-|\vec{a}|}\mu(t^{-\vec{a}}\, \cdot) * \nu)
   &= \sup_{z\in\R^n} \Big|\cfi \left( D_\xi^\alpha (t^{-|\vec{a}|}\widehat{\mu(t^{-\vec{a}}\, \cdot)}\, \widehat{D^\beta\nu} )\right)(z) \Big|\\
  &\leq \sum_{\gamma\leq\alpha} \binom{\alpha}{\gamma} \int t^{a\cdot(\alpha-\gamma)} |D^{\alpha-\gamma} 
  \widehat{\mu}(t^{\vec{a}}\xi)|\, |D^\gamma \widehat{D^\beta\nu}(\xi)|\, d\xi.
\end{split}
\end{gather}
For $|\alpha-\gamma|\leq M$ the integral is estimated using a Taylor expansion of order $N:=M-|\alpha-\gamma|$. 
All terms except the remainder vanish, because $\mu$ has vanishing moments up to order $M$. 
The integral is therefore bounded by
\begin{alignat}{1}
   \int t^{\vec{a}\cdot (\alpha-\gamma)} &\Big| \sum_{|\zeta|=N+1} 
   \frac{N+1}{\zeta!} (t^{\vec{a}}\xi)^\zeta \int_0^1 (1-\theta)^N 
  \partial_\xi^\zeta D_\xi^{\alpha-\gamma} \widehat{\mu}(\theta t^{\vec{a}}\xi) d\theta\, \Big|\,
  |D^\gamma \widehat{D^\beta\nu}(\xi)|\, d\xi
\notag \\
  &\leq  t^{(M+1)a_0}\, \max_{|\zeta|\leq M+1} \| D^\zeta \widehat{\mu}\, | \, L_\infty\| 
  \int |\xi|^{N+1} |D^\gamma \widehat{D^\beta\nu}(\xi)|\, d\xi
\notag \\
  &\leq  t^{(M+1)a_0}\, \max_{|\zeta|\leq M+1} p_{0,\zeta}(\widehat{\mu}) \, q_{M+1,\gamma} (\widehat{D^\beta\nu}).
\end{alignat}
For $|\alpha-\gamma|\geq M+1$ the integral in
\eqref{eq:seminormsConvolution} is easily seen to be estimated by 
\begin{equation}
  t^{(M+1)a_0}\, \max_{\zeta \leq \alpha} p_{0,\zeta} (\widehat{\mu})\, q_{0,\gamma}(\widehat{D^\beta\nu}).
\end{equation}
The claim is obtained by taking the largest of the bounds.
\end{proof}

\subsection{Comparison of norms}
For any $\vec{r} = (r_1, \ldots \, , r_n)>0$ and $f \in \cs'(\Rn)$ 
we deal in this section with the non-linear maximal operators of Peetre-Fefferman-Stein type
induced by $\{\psi_j\}_{j\in\N_0}$, 
\begin{alignat}{1}
\label{eq:maximalPFStype}
 \psi^*_j f(x)  =  \sup_{y \in \Rn}\, 
 \frac{|\psi_j * f (y)|}{\prod\limits_{\ell=1}^n
(1+ 2^{ja_\ell}\,|x_\ell - y_\ell |)^{r_\ell}} \, , \quad x\in \Rn,\enskip j \in\N_0  .
\end{alignat}
For simplicity their dependence on $\vec{a}$ and $\vec{r}$ is omitted.
(Compared to \eqref{PFS-id}, no $R$ is in the denominator here,
as $\psi_j*f$ need not have compact spectrum.)

To give the background, we recall an important technical result of Rychkov:

\begin{prop}[{\cite[(8')]{Ry99BPT}}]\label{max}
Let $\psi_0,\psi\in\cs(\Rn)$ be given such that \eqref{moment} holds, while
$\varphi_0,\varphi\in\cs(\Rn)$ 
fulfil the Tauberian conditions \eqref{con1}--\eqref{con2} in terms of some $\varepsilon'>0$.
When $0< p < \infty$, $0<q\le \infty$ and $s < (M_\psi+1)\, a_0$
there exists a constant $c>0$ such that for $f \in \cs' (\Rn)$,
\begin{equation}
 \| 2^{sj} \psi^*_j f \, | L_p(\ell_q)\|
\le c\,                     
 \| 2^{sj} \varphi^*_j f \, | L_p(\ell_q)\|.
\end{equation}
\end{prop}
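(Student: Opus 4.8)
The plan is to follow Rychkov's strategy, adapted to the anisotropic mixed-norm setting, in three main stages: first establish a pointwise inequality relating the maximal functions $\psi_j^*f$ and $\varphi_k^*f$ at different dyadic scales, then use the summation Lemma~\ref{sum2-lem} to convert this into a genuine (non-self-referential) bound, and finally apply the Calder\'on reproducing formula together with Lemma~\ref{sum1-lem} to assemble the $L_p(\ell_q)$-estimate. First I would use Lemma~\ref{lem:calderonRP} to write $f=\sum_{k=0}^\infty \lambda_k * \varphi_k * f$, where the kernels $\lambda_0,\lambda$ arise from the Tauberian functions $\varphi_0,\varphi$. Convolving with $\psi_j$ gives $\psi_j * f = \sum_{k=0}^\infty \psi_j * \lambda_k * \varphi_k * f$, so the whole argument rests on good estimates for the composed kernels $\psi_j * \lambda_k$.

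The key technical step is to bound these composed kernels. Here the moment condition \eqref{moment} on $\psi$ and the spectral support of $\widehat{\lambda}$ (supported where $|\xi|_{\vec a}$ is comparable to $1$ after rescaling) force rapid decay in $|j-k|$. For the high-frequency terms $k\ge j$ one exploits the vanishing moments of $\psi$ via a Taylor expansion exactly as in Lemma~\ref{Bui}, producing a gain of $2^{(j-k)(M_\psi+1)a_0}$; for $k<j$ one instead uses the vanishing moments of $\lambda$ (equivalently the support of $\widehat\lambda$ away from the origin). Combining these with the Schwartz decay of the kernels yields, for every $N$, a pointwise estimate of the form
\begin{equation}
2^{sj}\psi_j^*f(x) \le C_N \sum_{k=0}^\infty 2^{-|j-k|N}\, 2^{sk}(\varphi_k^*f)(x)\cdot\bigl(\text{shift factors}\bigr),
\end{equation}
where the anisotropic Peetre--Fefferman--Stein denominators $\prod_\ell(1+2^{ja_\ell}|x_\ell-y_\ell|)^{r_\ell}$ are controlled by comparing scales $2^{ja_\ell}$ and $2^{ka_\ell}$, costing at most a factor $2^{|j-k| r_\ell a_\ell}$ that is absorbed into the arbitrary $N$. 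The requirement $s<(M_\psi+1)a_0$ is precisely what guarantees that the gain from the moment condition beats the loss $2^{sj}$ in the high-frequency regime $k\ge j$, so that the series converges.

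The main obstacle is making the self-referential maximal-function estimate rigorous without assuming a priori finiteness. The natural first bound has $\psi_j^*f$ appearing implicitly on both sides (because estimating $\psi_j*\lambda_k*\varphi_k*f$ at a point $y$ involves the maximal function of $\varphi_k*f$ at nearby points, which one re-expresses through $\psi_j^*f$-type quantities via a power-trick with exponent $r>0$ small). This is exactly the situation handled by Lemma~\ref{sum2-lem}: setting $d_j$ to be a suitable supremum of $2^{sj}\psi_j^*f$ and $b_k$ the analogous quantity for $\varphi_k^*f$, the a priori polynomial bound \eqref{Ry0-ineq} follows from $f\in\cs'$ having finite order (giving $\psi_j^*f(x)\le C2^{jN_0}$ locally), and the absorption inequality \eqref{Ry1-ineq} is the pointwise estimate above. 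The lemma then delivers the clean, $r$-free bound \eqref{Ry2-ineq}. Once the pointwise inequality $2^{sj}\psi_j^*f \le C_N \sum_k 2^{-|j-k|\delta}\,2^{sk}\varphi_k^*f$ holds unconditionally, I would take $L_p(\ell_q)$-norms and invoke Lemma~\ref{sum1-lem} to remove the convolution in $\ell_q$, yielding the asserted inequality $\|2^{sj}\psi_j^*f\mid L_p(\ell_q)\|\le c\,\|2^{sj}\varphi_j^*f\mid L_p(\ell_q)\|$. I expect the careful bookkeeping of the anisotropic shift factors and the verification that the single exponent $p$ (rather than the full vector $\vec p$) suffices here — this being the stepping-stone proposition before the mixed-norm version — to be the most delicate routine part.
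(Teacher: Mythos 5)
Your overall architecture (Calder\'on reproducing formula for $\varphi_0,\varphi$, kernel estimates for $\psi_j*\lambda_k$ via Lemma~\ref{Bui}, then Lemma~\ref{sum1-lem}) matches the paper's proof, which appears as the proof of the mixed-norm generalisation Theorem~\ref{maxmod} (with $I_{j,k}$ estimated in Lemma~\ref{ijk}). But your middle stage is wrong: there is \emph{no} self-referential estimate in this proposition, and Lemma~\ref{sum2-lem} plays no role here. Once you write $\psi_j*f=\sum_k\psi_j*\lambda_k*\varphi_k*f$, the crude bound $|\psi_j*\lambda_k*\varphi_k*f(y)|\le \varphi_k^*f(y)\,I_{j,k}$ is immediate from the definition of $\varphi_k^*f$, and the shift $\varphi_k^*f(y)\le\varphi_k^*f(x)\max\bigl(1,2^{(k-j)\vec a\cdot\vec r}\bigr)\prod_{l}(1+2^{ja_l}|x_l-y_l|)^{r_l}$ closes the estimate with $\psi_j^*f$ only on the left and $\varphi_k^*f$ only on the right --- two \emph{different} systems, so nothing needs to be absorbed. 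The power trick, the a priori bound \eqref{Ry0-ineq}, and Lemma~\ref{sum2-lem} belong to Proposition~\ref{prop:pointwiseEstimate}/Theorem~\ref{inverse}, where one controls $\psi_j^*f$ by convolutions $\psi_k*f$ of the \emph{same} system. Worse, your proposed justification of \eqref{Ry0-ineq} "from $f\in\cs'$ having finite order" is exactly the incomplete step in Rychkov that the paper flags in Remarks~\ref{Rych0-rem} and~\ref{Rych-rem}: that argument only works when $\operatorname{ord}_{\cs'}(f)\le\vec r$ componentwise, and repairing it in general requires the delicate Step~2 of the proof of Proposition~\ref{prop:pointwiseEstimate}. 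So had absorption actually been needed, your sketch would carry a genuine gap at precisely the known weak point.

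You have also swapped the two regimes in the kernel estimate. The vanishing moments of $\psi$ (order $M_\psi$, via the Taylor argument of Lemma~\ref{Bui}) are exploited when $\psi_j$ is the \emph{finer}-scale factor, i.e.\ for $k\le j$, producing the gain $2^{(k-j)(M_\psi+1)a_0}$; this is where the hypothesis $s<(M_\psi+1)a_0$ is consumed, since $2^{js}I_{j,k}\approx 2^{ks}2^{-(j-k)((M_\psi+1)a_0-s)}$. For $k\ge j$ one uses instead that $\widehat\lambda$ vanishes near the origin, so $\lambda\in\bigcap_M\cs_M$ has vanishing moments of \emph{all} orders, giving a gain $2^{-(k-j)((M+1)a_0-\vec a\cdot\vec r)}$ with $M$ at our disposal (chosen so that $(M+1)a_0-2\vec a\cdot\vec r+s>0$). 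Your version — moments of $\psi$ for $k\ge j$, moments of $\lambda$ for $k<j$ — would produce no decay at all in the regime $k\ge j$ (the Taylor expansion is applied to the wrong factor, and $M_\psi$ is finite, so it also could not beat an arbitrary $\vec a\cdot\vec r$), and the series over $k$ would fail to converge.
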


We shall extend this to a mixed-norm version, which even covers
parameter-dependent families of the spectral cut-off functions; this will be
crucial for our results in \cite{JoMHSi2}.
So if $\Theta$ denotes an index set and $\psi_{\theta,0},\psi_\theta\in\S(\Rn)$,
$\theta\in\Theta$, we set $\psi_{\theta,j}(x):=2^{j|\vec a|}\psi_\theta(2^{j\vec a}x)$ for 
$j\in\N$.
Not surprisingly we need to assume that the $\psi_\theta$ fulfil the same moment condition, i.e.\ 
uniformly with respect to $\theta$:

\begin{thm}\label{maxmod}
Let $\psi_{\theta,0},\psi_\theta\in\cs(\Rn)$ be given such that \eqref{moment} 
holds for some $M_{\psi_\theta}$ independent of $\theta\in \Theta$, while $\varphi_0,\varphi\in\cs(\Rn)$
fulfil \eqref{con1}--\eqref{con2} in terms of an $\varepsilon'>0$.
Also let $0< \vec{p}< \infty$, $0<q\le \infty$ and $s < (M_{\psi_\theta}+1)\, a_0$. 
For a given $\vec{r}$ in \eqref{eq:maximalPFStype}
and an integer $M\geq -1$ chosen so large that $(M+1)a_0+s>2\vec{a}\cdot\vec{r}$, we assume that
\begin{alignat*}{2}
A  & :=  \sup_{\theta \in \Theta}\,  \max  \| \, D^\alpha \cf \psi_\theta\, | L_\infty\|&\, <\infty ,
\\
B & : =  \sup_{\theta \in \Theta}\,  \max \| \, (1+ |\xi|)^{M+1}\, D^\gamma \cf \psi_\theta (\xi)\, | L_1\|&\, <\infty ,
\\
C & :=  \sup_{\theta \in \Theta}\,  \max \| \, D^\alpha \cf \psi_{\theta,0}\, | L_\infty\|&\, <\infty ,
\\
D  & : =  \sup_{\theta \in \Theta}\,  \max \| \, (1+ |\xi|)^{M+1}\, D^\gamma \cf \psi_{\theta,0} (\xi)\, | L_1\|&\, <\infty,
\end{alignat*}
where the maxima are over all $\alpha$ with $|\alpha|\leq M_{\psi_\theta}+1$ or $\alpha\leq
\lceil\vec r + 2\rceil$,  
respectively over $\gamma\leq \lceil\vec r+2\rceil$.
Then there exists a constant $c>0$ such that for $f \in \cs' (\Rn)$,
\begin{gather}
\begin{split}
\label{eq-8}
  \| 2^{sj} \sup_{\theta \in \Theta}\, \psi_{\theta,j}^* f \, | L_{\vec{p}}(\ell_q)\|
  \le  c (A +B + C +D)\, \| 2^{sj} \varphi^*_j f \, | L_{\vec{p}}(\ell_q)\|.
\end{split}
\end{gather}
\end{thm}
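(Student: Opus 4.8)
The plan is to extend Rychkov's argument to mixed norms and to a family indexed by $\theta$, the genuinely new features being the uniform control of all constants by $A,B,C,D$ and the treatment of arbitrary $f\in\cs'$. First I would invoke Lemma~\ref{lem:calderonRP} for the pair $\varphi_0,\varphi$ to fix a reproducing formula \eqref{eq:calderonRP}, with $\lambda_0,\lambda\in\cs$ whose transforms are supported as in \eqref{con1}--\eqref{con2}; since $\widehat\lambda$ vanishes near the origin, $\lambda\in\cs_M$ for every $M$. Convolving $f=\sum_{k\ge0}\lambda_k*\varphi_k*f$ with $\psi_{\theta,j}$ gives
\begin{equation}
  \psi_{\theta,j}*f=\sum_{k=0}^\infty K_{\theta,j,k}*(\varphi_k*f),\qquad K_{\theta,j,k}:=\psi_{\theta,j}*\lambda_k,
\end{equation}
so everything is reduced to $\theta$-uniform pointwise bounds on the kernels $K_{\theta,j,k}$.

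Next I would estimate $K_{\theta,j,k}$ by Lemma~\ref{Bui} in two regimes. For $k\ge j$ the cancellation is furnished by the infinite vanishing moments of $\lambda$, realised to the chosen order $M$; this gives the decay $2^{-(k-j)(M+1)a_0}$, with the $\theta$-dependence entering only through the weighted $L_1$-quantities $B$ (and $D$ when $j=0$ or $k=0$). For $k<j$ the cancellation comes instead from the moment condition \eqref{moment} on $\psi_\theta$, giving $2^{-(j-k)(M_{\psi_\theta}+1)a_0}$, with $\theta$-uniformity now encoded in the sup-norms $A$ (and $C$). In both regimes Lemma~\ref{Bui} also yields spatial decay of $K_{\theta,j,k}$ at the coarser scale $2^{-\min(j,k)\vec a}$ to any prescribed order. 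Inserting $|\varphi_k*f(z)|\le\varphi_k^*f(x)\prod_\ell(1+2^{ka_\ell}|x_\ell-z_\ell|)^{r_\ell}$, splitting $|x_\ell-z_\ell|\le|x_\ell-y_\ell|+|y_\ell-z_\ell|$ (which for $k>j$ produces two weight mismatches, each of size $2^{(k-j)\vec a\cdot\vec r}$, whence the factor $2\vec a\cdot\vec r$ in the hypothesis), then dividing by the scale-$j$ weight and taking $\sup_y$, leads at least formally to
\begin{equation}
  2^{sj}\sup_{\theta\in\Theta}\psi_{\theta,j}^*f(x)\le c(A+B+C+D)\sum_{k=0}^\infty 2^{-\delta|j-k|}\,2^{sk}\varphi_k^*f(x),
\end{equation}
where $\delta:=\min\bigl((M_{\psi_\theta}+1)a_0-s,\ (M+1)a_0+s-2\vec a\cdot\vec r\bigr)>0$ by the two hypotheses on $s$.

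The hard part is that for arbitrary $f\in\cs'$ the maximal functions need not be finite, so the division above is not yet legitimate and the displayed estimate is only formal. To make it rigorous I would run the $k\ge j$ part in the self-improving form of Lemma~\ref{sum2-lem}: fixing $x$, set $d_k:=2^{sk}\sup_\theta\psi_{\theta,k}^*f(x)$ and $b_k:=(2^{sk}\varphi_k^*f(x))^r$ for a sufficiently small $r\in(0,1]$, and keep a factor $|\varphi_k*f|^{1-r}$ unevaluated so that the kernel estimate reproduces \eqref{Ry1-ineq}, i.e.\ $d_j\le C_N\sum_{k\ge j}2^{(j-k)N}b_k d_k^{1-r}$. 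Temperateness of $f$ supplies the a priori bound $d_j\le C2^{jN_0}$, which is exactly \eqref{Ry0-ineq}; here it is essential that Lemma~\ref{sum2-lem} permits $j_0>0$ and requires no strict positivity, cf.\ Remark~\ref{Rych1-rem}. Lemma~\ref{sum2-lem} then yields $d_j^r\le C_N\sum_{k\ge j}2^{(j-k)Nr}b_k$, with finite right-hand side, thereby establishing finiteness and the pointwise estimate simultaneously.

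Finally I would combine the two regimes into $(2^{sj}\sup_\theta\psi_{\theta,j}^*f)^r\le c\sum_k 2^{-\delta r|j-k|}(2^{sk}\varphi_k^*f)^r$ and take quasi-norms. Applying Lemma~\ref{sum1-lem} to the nonnegative sequences $((2^{sk}\varphi_k^*f)^r)_k$ with exponents $(\vec p/r,q/r)$, which are admissible once $r$ is small, and then taking the $r$-th root, produces exactly \eqref{eq-8}. The two points I expect to demand the most care are the uniformity in $\theta$, namely checking that the finitely many seminorms thrown off by Lemma~\ref{Bui} are all dominated by $A,B,C,D$ for the stated ranges of $\alpha,\gamma$, and the finiteness issue for general distributions, which is precisely what the power-$r$ bootstrap of Lemma~\ref{sum2-lem} is designed to overcome.
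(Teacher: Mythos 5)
Your core architecture is exactly the paper's: Calder\'on's formula from Lemma~\ref{lem:calderonRP} applied to $\varphi_0,\varphi$, the two-regime estimates of the kernels $\psi_{\theta,j}*\lambda_k$ via Lemma~\ref{Bui} (the paper packages these as the weighted integrals $I_{j,k}$ of Lemma~\ref{ijk}, with the $\theta$-uniformity carried by $A,B,C,D$), the two weight mismatches of size $2^{(k-j)\vec a\cdot\vec r}$ explaining the hypothesis $(M+1)a_0+s>2\vec a\cdot\vec r$, the same $\delta$, and Lemma~\ref{sum1-lem} at the end.

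However, the step you single out as ``the hard part'' is a genuine flaw: no such difficulty exists in this theorem, and your proposed fix would not go through. There is no division by a possibly infinite quantity in the derivation. Each term of \eqref{eq:psiThetaf} obeys $|\psi_{\theta,j}*\lambda_k*\varphi_k*f(y)|\le \varphi_k^*f(y)\,I_{j,k}$, and combining this with $\varphi_k^*f(y)\le\varphi_k^*f(x)\max\bigl(1,2^{(k-j)\vec a\cdot\vec r}\bigr)\prod_{l}(1+2^{ja_l}|x_l-y_l|)^{r_l}$, one divides only by the \emph{finite} weight $\prod_l(1+2^{ja_l}|x_l-y_l|)^{r_l}$ and takes suprema; the resulting pointwise estimate is a valid inequality in $[0,\infty]$, trivially true wherever $\varphi_k^*f(x)=\infty$ (pointwise convergence of the series at each $y$ follows by testing the $\cs'$-convergent Calder\'on series against $\psi_{\theta,j}(y-\cdot)$). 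The self-improvement of Lemma~\ref{sum2-lem} is needed only when the \emph{same} maximal function occurs on both sides, i.e.\ in Proposition~\ref{prop:pointwiseEstimate} and hence Theorem~\ref{inverse}; here the two sides involve different systems ($\psi_\theta$ versus $\varphi$), so there is no self-referential quantity to bootstrap. Indeed, your transplant breaks down concretely: keeping a factor $|\varphi_k*f|^{1-r}$ unevaluated produces $(\varphi_k^*f(x))^{1-r}$ on the right, \emph{not} $d_k^{1-r}$ with $d_k=2^{sk}\sup_\theta\psi^*_{\theta,k}f(x)$, and since the $\psi_\theta$ satisfy no Tauberian condition, $\varphi_k^*f$ cannot be dominated by $\sup_\theta\psi^*_{\theta,k}f$; thus hypothesis \eqref{Ry1-ineq} of Lemma~\ref{sum2-lem} is unavailable. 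Moreover, your blanket claim that temperateness of $f$ supplies $d_j\le C2^{jN_0}$ is precisely the $O$-condition \eqref{ON0-eq} that can \emph{fail} when $\operatorname{ord}_{\cs'}(f)$ exceeds some $r_l$ --- the very gap in \cite{Ry99BPT} that Remark~\ref{Rych-rem} and Step~2 of the proof of Proposition~\ref{prop:pointwiseEstimate} are designed to repair, and which would additionally require $\theta$-uniformity here. Deleting your bootstrap paragraph and running the direct estimate --- concluding with Lemma~\ref{sum1-lem} at the exponents $(\vec p,q)$ themselves, no $r$-th powers needed --- yields \eqref{eq-8} for all $f\in\cs'(\Rn)$, which is exactly how the paper proceeds.
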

Hereby $\lceil t\rceil$ denotes the smallest integer $k\geq t$, 
and  $\lceil\vec r \rceil:= \left(\lceil r_1\rceil,\ldots,\lceil r_n\rceil\right)$.

In the proof of the estimate \eqref{eq-8} we choose $\lambda_0, \lambda \in \cs (\Rn)$ 
by applying Lemma~\ref{lem:calderonRP} to the given $\varphi_0,\varphi\in\cs(\Rn)$.
Following \cite{Ry99BPT}, we then consider the auxiliary integrals
\begin{equation}
  I_{j,k}:= \int \, |\psi_{\theta,j} * \lambda_k (z)|\, \prod\limits_{\ell=1}^n
  (1+ 2^{k a_\ell}\,|z_\ell |)^{r_\ell}\, dz\, ,\quad j,k \in \N_0.
\label{ijk-id}
\end{equation}
The integrand may be estimated using that $\psi_{\theta,j} * \lambda_k (z)=
2^{k|\vec a|}\psi_{\theta,j-k} * \lambda (2^{k\vec a}z)$, so
the Binomial Theorem and Lemma~\ref{Bui} with $\beta=0$, $t^{-1}=2^{j-k}\ge1$ yield
\begin{equation}
\begin{split}\label{binomEstimate}
  |\psi_{\theta,j} * \lambda_k(z)| \prod_{l=1}^n (1+&|2^{ka_l}z_l|)^{r_l}
  \leq 2^{k|\vec a|} \sum_{\alpha\leq\lceil\vec r\rceil} \binom{\lceil\vec r\rceil}{\alpha}
  p_{\alpha,0} (\psi_{\theta,j-k}*\lambda) \\ 
  &\leq C_{\lceil\vec r\rceil} 2^{(k-j)(M_{\psi_\theta}+1)a_0+k|\vec a|} 
  \sideset{}{'}\max p_{0,\zeta}(\widehat{\psi_\theta}) \cdot q_{M_{\psi_\theta}+1,\gamma}(\widehat{\lambda}),
\end{split}
\end{equation}
where $\sideset{}{'}\max$ denotes a maximum over finitely many multi-indices, in this case
over $\zeta$ fulfilling $|\zeta|\leq M_{\psi_\theta}+1$ or $\zeta\leq\lceil\vec r\rceil$,
respectively $\gamma \leq \vec r$. 

\begin{lem}\label{ijk}
For any integer $M\geq -1$ there exists a constant $c=c_{M,M_\psi,\vec{r},\lambda_0,\lambda}$ such that
for $k,j \in {\mathbb{N}}_0$,
\begin{equation}
  I_{j,k}\le c \, (A+B+C+D)\times 
  \left\{
  \begin{array}{lll}
  2^{(k-j)(M_{\psi_\theta}+1)a_0} & & \mbox{for}\quad k \le j , \\
  2^{-(k-j)((M+1)a_0-\vec{a}\cdot\vec{r}\, )} & & \mbox{for}\quad j \le k\,  
  \end{array}
  \right.
\end{equation}
when $\psi_{\theta,0},\psi_\theta\in\S$ and the
$\psi_\theta$ fulfil \eqref{moment} for some $M_{\psi_\theta}$ independent of $\theta\in \Theta$.
\end{lem}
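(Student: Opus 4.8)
The plan is to estimate the single integral $I_{j,k}$ of \eqref{ijk-id} by bounding it through a finite family of seminorms $p_{\alpha,0}(\psi_{\theta,j}*\lambda_k)$ with $\alpha\le\lceil\vec r+2\rceil$, which are in turn controlled by Lemma~\ref{Bui}. The decisive structural observation is that the factor carrying a moment condition together with a \emph{contractive} dyadic dilation (by $t=2^{-|j-k|}\le1$) must play the role of $\mu\in\cs_M$ in Lemma~\ref{Bui}, the other factor being $\nu$. This forces a split into the regimes $k\le j$ and $j\le k$. After extracting the appropriate dilation, $I_{j,k}$ always takes the shape $\int|g(w)|\prod_{\ell}(1+|w_\ell|)^{r_\ell}\,dw$; splitting $\prod_\ell(1+|w_\ell|)^{r_\ell}=\prod_\ell(1+|w_\ell|)^{r_\ell+2}\cdot\prod_\ell(1+|w_\ell|)^{-2}$ and using that the last factor is integrable over $\Rn$, the Binomial Theorem bounds $I_{j,k}$ by $c\sum_{\alpha\le\lceil\vec r+2\rceil}p_{\alpha,0}(g)$. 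This is exactly why the enlarged exponent $\lceil\vec r+2\rceil$ (rather than the $\lceil\vec r\rceil$ of the pointwise estimate \eqref{binomEstimate}) enters the hypotheses $A$--$D$.

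For $k\le j$ I would use the identity preceding \eqref{binomEstimate}, writing $\psi_{\theta,j}*\lambda_k=2^{k|\vec a|}(\psi_{\theta,j-k}*\lambda)(2^{k\vec a}\cdot)$ for $1\le k\le j$, so that after the change of variables $w=2^{k\vec a}z$ the weight $\prod_\ell(1+2^{ka_\ell}|z_\ell|)^{r_\ell}$ turns into $\prod_\ell(1+|w_\ell|)^{r_\ell}$. Applying Lemma~\ref{Bui} with $\mu=\psi_\theta\in\cs_{M_{\psi_\theta}}$, $\nu=\lambda$, $\beta=0$ and $t=2^{k-j}\le1$ produces the factor $2^{(k-j)(M_{\psi_\theta}+1)a_0}$, and $\max p_{0,\zeta}(\widehat{\psi_\theta})\le A$ furnishes the uniform constant, while the weighted $L_1$-seminorm of $\widehat\lambda$ is absorbed into $c$. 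When $k=0<j$ one argues identically with $\lambda_0$ in place of the dilated $\lambda$, and the residual case $j=k=0$ is settled directly through $\cfi\colon L_1\to L_\infty$ and Leibniz' rule, which is where $\|D^\alpha\widehat{\psi_{\theta,0}}\,|\,L_\infty\|\le C$ is used.

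For $j\le k$ the roles are interchanged. Here I would use the analogous identity $\psi_{\theta,j}*\lambda_k=2^{j|\vec a|}(\psi_\theta*\lambda_{k-j})(2^{j\vec a}\cdot)$ and apply Lemma~\ref{Bui} with $\mu=\lambda$, $\nu=\psi_\theta$ and $t=2^{j-k}\le1$: since $\widehat\lambda$ is supported away from the origin by Lemma~\ref{lem:calderonRP}, one has $\lambda\in\cs_M$ for \emph{every} $M\ge-1$, in particular for the $M$ of the statement, so the moment condition of order $M$ is available and yields the gain $2^{-(k-j)(M+1)a_0}$; the uniform constant now comes from $q_{M+1,\gamma}(\widehat{\psi_\theta})\le B$ (and, when $j=0$, from the corresponding bound $D$ for $\psi_{\theta,0}$, with $\lambda_k$ dilated). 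The change of variables $u=2^{j\vec a}z$ leaves the enlarged weight $\prod_\ell(1+2^{(k-j)a_\ell}|u_\ell|)^{r_\ell}$, which is dominated by $2^{(k-j)\vec a\cdot\vec r}\prod_\ell(1+|u_\ell|)^{r_\ell}$ because $2^{(k-j)a_\ell}\ge1$; combining this competing factor with the moment gain gives the asserted $2^{-(k-j)((M+1)a_0-\vec a\cdot\vec r)}$. Replacing each individual constant by $A+B+C+D$ yields the stated uniform estimate. I expect the main difficulty to be not any single inequality but the careful bookkeeping across the two regimes---keeping the estimates uniform in $\theta$ and attributing the four sup- and weighted-$L_1$-constants correctly to the low-frequency factor $\psi_{\theta,0}$ and the high-frequency factor $\psi_\theta$. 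It is worth noting that this two-sided bound holds for every $M\ge-1$; the positivity of $(M+1)a_0-\vec a\cdot\vec r$, ensured by $(M+1)a_0+s>2\vec a\cdot\vec r$, is only needed afterwards, when $I_{j,k}$ is summed against the weights $2^{sj}$.
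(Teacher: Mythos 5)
Your proposal is correct and follows essentially the same route as the paper's proof: the same splitting of the weight into $\prod_\ell(1+|w_\ell|)^{r_\ell+2}\cdot\prod_\ell(1+|w_\ell|)^{-2}$ with a binomial expansion over $\alpha\le\lceil\vec r+2\rceil$ (whence the constants $A$--$D$), the same application of Lemma~\ref{Bui} with $\mu=\psi_\theta$, $\nu=\lambda$, $t=2^{k-j}$ in the regime $k\le j$, and the same role interchange for $j\le k$ using $\lambda\in\bigcap_M\cs_M$ at the cost of the factor $2^{(k-j)\vec a\cdot\vec r}$ (the paper adjusts the weight before dilating, you after --- the same computation in a different order), with the boundary cases $I_{j,0}$, $I_{0,k}$, $I_{0,0}$ handled via $C$ and $D$ just as the paper indicates.
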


\begin{proof} 
First we consider the case $j\geq k\geq 1$, where \eqref{binomEstimate} yields
\begin{equation}
\begin{split}
  I_{j,k} &\leq \sup_{z\in\Rn} |\psi_{\theta,j} * \lambda_k(z)| \prod_{l=1}^n
  (1+2^{ka_l}|z_l|)^{r_l+2} \int \prod_{l=1}^n 2^{-ka_l}(1+|x_l|)^{-2}\, dx
\\ 
  &\leq C_{\vec{r}}\, 2^{(k-j)(M_{\psi_\theta}+1)a_0} \sideset{}{'}\max \|\, D^\zeta
  \widehat{\psi_\theta}\, |\, L_\infty\| \cdot q_{M_{\psi_\theta}+1,\gamma}(\widehat{\lambda})
\\ 
  &\leq C_{\vec{r},M_{\psi_\theta},\lambda}\, 2^{(k-j)(M_{\psi_\theta}+1)a_0} A.
\end{split} 
\end{equation}
For $k\geq j\geq 1$ one can replace $2^{ka_l}$ in \eqref{ijk-id} by $2^{ja_l}$ at the cost of
the factor $2^{(k-j)\vec a\cdot\vec r}$ in front of the integral.
Then the roles of $\psi_\theta$ and $\lambda$ can be interchanged, 
since the support information on $\widehat\lambda$ yields $\lambda\in \bigcap_M \cs_M$.
This gives, with $\rho=\lceil\vec{r}+2\rceil$,
\begin{equation}
  I_{j,k} \leq c2^{(k-j)\vec{a}\cdot\vec{r}} \sum_{\alpha \leq \rho} 
  \binom{\rho}{\alpha}\, p_{\alpha,0}(\psi_\theta*\lambda_{k-j})
  \leq C_{M,\vec{r},\lambda} 2^{-(k-j)((M+1)a_0-\vec{a}\cdot\vec{r}\, )} B.
\end{equation}
Similar estimates are obtained for $I_{j,0}$, $I_{0,k}$ and $I_{0,0}$ with $C$, $D$ as factors.
\end{proof}

Using Lemma~\ref{ijk}, the proof given in \cite{Ry99BPT} is now extended to a

\begin{proof}[Proof of Theorem~\ref{maxmod}]
The identity \eqref{eq:calderonRP} gives for $f\in\cs'$ and $j\in\N$ that
\begin{equation}\label{eq:psiThetaf}
\psi_{\theta,j}*f = \sum_{k=0}^\infty \psi_{\theta,j}*\lambda_k*\varphi_k*f.
\end{equation}
By Lemma~\ref{ijk} with $M$ chosen so large that $(M+1)a_0+s>2\vec{a}\cdot\vec{r}$, 
there exists a $\theta$-independent constant $c>0$ such that
the summands can be crudely estimated,
\begin{equation}
  \begin{split}
  |\psi_{\theta,j}*\lambda_k*{}&\varphi_k*f(y)| 
  \leq \varphi_k^* f(y) \int |\psi_{\theta,j} *\lambda_k(z)| \prod_{l=1}^n (1+2^{ka_l}|z_l|)^{r_l}\, dz\\
  &\leq  c\, (A +B + C +D)\, \varphi_k^* f(y) \times
  \left\{\begin{array}{lll}
    2^{(k-j)(M_{\psi_\theta}+1)a_0} &  \mbox{for}\enskip k \le j, \\
    2^{-(k-j)((M+1)a_0-\vec{a}\cdot\vec{r}\, )} &  \mbox{for}\enskip j \le k . 
  \end{array}\right.
  \end{split}
\end{equation}
Here
$  \varphi_k^*f(y) \leq \varphi_k^*f(x) 
\max    \big(1,2^{(k-j)\vec{a}\cdot\vec{r}}\big)\prod_{l=1}^n(1+2^{ja_l}|x_l-y_l|)^{r_l}
$
is easily verified for $x,y\in{\mathbb{R}}^n$ and $j,k\in{\mathbb{N}}_0$ by elementary calculations,
so 
therefore
\begin{equation}
\begin{split}
  \sup_{y\in\Rn}{}&\frac{|\psi_{\theta,j} *\lambda_k * \varphi_k*f(y)|}{\prod_{l=1}^n
  (1+2^{ja_l}|x_l-y_l|)^{r_l}}
\\ 
  &\qquad\ \leq c (A +B + C +D)\, \varphi_k^* f(x) \times 
  \left\{\begin{array}{lll}
  2^{(k-j)(M_{\psi_\theta}+1)a_0} &  \mbox{for}\enskip k \le j, \\ 
  2^{-(k-j)((M+1)a_0-2\vec{a}\cdot\vec{r}\, )} &  \mbox{for}\enskip j \le k . 
  \end{array}\right.  
\end{split}
\end{equation}
Inserting into \eqref{eq:psiThetaf} and using that 
$\delta:=\min((M_{\psi_\theta}+1)a_0-s,(M+1)a_0-2\vec{a}\cdot\vec{r}+s)>0$ by the assumptions, the
above implies for $j\geq 0$, 
\begin{equation}
  2^{js} \sup_{\theta\in \Theta} \psi_{\theta,j}^* f(x)
  \leq c (A +B + C +D) \sum_{k=0}^\infty 2^{ks}\varphi_k^*f(x)\, 2^{-|j-k|\delta}.
\end{equation}
Now Lemma~\ref{sum1-lem} yields \eqref{eq-8}.
\end{proof}

\subsection{Control by convolutions}
Since $\widehat{\psi}$ need not have compact support,
Proposition~\ref{prop:pointwiseEstMax} is replaced by a pointwise
estimate with a sum representing the higher frequencies:

\begin{prop}\label{prop:pointwiseEstimate}
Let $\psi_0,\psi\in\cs(\Rn)$ satisfy the Tauberian conditions \eqref{con1}--\eqref{con2}. 
For $N,\vec r, \tau>0$ there exists a constant $C_{N,\vec{r},\tau}$ 
such that for $f\in\cs'(\Rn)$ and $j\in\N_0$,
\begin{alignat}{1}
\label{eq:maxFuncEstimate}
\left(\psi_j^*f(x)\right)^\tau \leq C_{N,\vec{r},\tau}\, \sum_{k\geq j} 2^{(j-k)N\tau} 
\int \frac{2^{k|\vec{a}|} |\psi_k*f(z)|^\tau}{\prod_{l=1}^n (1+2^{ka_l}|x_l-z_l|)^{r_l \tau}} \, dz.
\end{alignat}
\end{prop}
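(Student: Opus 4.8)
The plan is to establish \eqref{eq:maxFuncEstimate} by running the same Calder\'on-reproducing-formula machinery as in the proof of Theorem~\ref{maxmod}, but now keeping the convolution $\psi_k * f$ on the right-hand side rather than passing to the auxiliary maximal function $\varphi_k^* f$. First I would apply Lemma~\ref{lem:calderonRP} to the pair $\psi_0,\psi$ itself, obtaining $\lambda_0,\lambda\in\cs$ with $f=\sum_{k=0}^\infty \lambda_k*\psi_k*f$. Writing $\psi_j*f = \sum_{k=0}^\infty \psi_j*\lambda_k*\psi_k*f$ and estimating pointwise, the integral against $\psi_j*\lambda_k$ is controlled exactly as in Lemma~\ref{ijk}: the moment condition \eqref{moment} on $\psi$ together with Lemma~\ref{Bui} produces geometric decay $2^{(k-j)(M_\psi+1)a_0}$ for $k\le j$ and $2^{-(k-j)((M+1)a_0-\vec a\cdot\vec r)}$ for $k\ge j$, with $M$ chosen large. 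The crucial difference is that here I do \emph{not} replace $|\psi_k*f(z)|$ by a supremum; instead I retain it inside the integral, so that after the change of variables absorbing the kernel dilations one arrives at a bound of the form
\begin{equation*}
  \psi_j^*f(x) \le C \sum_{k=0}^\infty 2^{-|j-k|\delta'}\,
  \int \frac{2^{k|\vec a|}\,|\psi_k*f(z)|}{\prod_{l=1}^n (1+2^{ka_l}|x_l-z_l|)^{r_l}}\,dz,
\end{equation*}
where each integral is itself dominated by the maximal function $\psi_k^*f(x)$ up to the shift factor $\max(1,2^{(k-j)\vec a\cdot\vec r})$ coming from comparing the weights at levels $j$ and $k$.

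The genuinely new ingredient, and what I expect to be the main obstacle, is the passage from a \emph{first}-power estimate to the \emph{$\tau$-th}-power estimate with summation restricted to $k\ge j$. The restriction $k\ge j$ is forced because the terms with $k<j$ cannot be absorbed: they would involve $\psi_k^*f$ at coarser scales, which is precisely the quantity one is trying to bound. This is where Lemma~\ref{sum2-lem} enters. Setting $d_j := \psi_j^*f(x)$ (for fixed $x$) and $b_k := \big(\int 2^{k|\vec a|}|\psi_k*f(z)|^\tau \prod_l (1+2^{ka_l}|x_l-z_l|)^{-r_l\tau}\,dz\big)$, one first needs a crude bound $d_j\le C\,2^{jN_0}$, i.e.\ the hypothesis \eqref{Ry0-ineq}; this follows because $f\in\cs'$ has finite order, so $|\psi_j*f|$ grows at most polynomially in $2^j$ and in $\langle y\rangle$, which the weight in \eqref{eq:maximalPFStype} tames. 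One then has to convert the first-power recurrence into the normalized recurrence \eqref{Ry1-ineq} with exponent $1-r$ where $r$ is determined by $\tau$: splitting $|\psi_k*f(z)| = |\psi_k*f(z)|^\tau\,|\psi_k*f(z)|^{1-\tau}$ and bounding the second factor by $(\psi_k^*f(x))^{1-\tau}\prod_l(1+2^{ka_l}|x_l-z_l|)^{r_l(1-\tau)}$ lets me pull $d_k^{1-\tau}$ out of the integral, leaving exactly $b_k$ inside (with the weight exponent now $r_l\tau$). Taking $r=\tau$ (for $\tau\le 1$; the case $\tau\ge1$ reduces to $\tau\le1$ by Jensen/monotonicity of the maximal function) and $N$ arbitrary, Lemma~\ref{sum2-lem} then upgrades the recurrence to \eqref{Ry2-ineq}, which is precisely the asserted inequality $\left(\psi_j^*f(x)\right)^\tau\le C\sum_{k\ge j}2^{(j-k)N\tau}b_k$.

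I would take care that the decay exponent $N$ in the final statement is \emph{arbitrary}, not tied to $M_\psi$; this is possible because in Lemma~\ref{sum2-lem} the constant $C_N$ is allowed to depend on $N$, and the geometric factors from Lemma~\ref{ijk} provide decay at any prescribed rate once $M$ is taken large enough relative to $N$, $\vec r$, and $\tau$. The finiteness of $b_k$ for each $k$ (needed so that the recurrence is not vacuous) again uses that $\psi_k*f$ has polynomial growth and $\vec r\tau$ is chosen large enough for the weight to be integrable, which is exactly the Tauberian-free part of the argument. The self-improving step of Lemma~\ref{sum2-lem} is what makes the $k<j$ terms disappear for free, so that no coarser-scale maximal functions survive on the right-hand side — this is the whole point of restricting the sum to $k\ge j$ and is the feature that distinguishes this proposition from Theorem~\ref{maxmod}.
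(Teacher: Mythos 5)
Your proposal contains two genuine gaps, both at the places where this proposition is actually delicate. First, the decomposition: you expand $\psi_j*f=\sum_{k=0}^\infty\psi_j*\lambda_k*\psi_k*f$ from the undilated reproducing formula and then claim that the terms with $k<j$ "disappear for free" through Lemma~\ref{sum2-lem}. That lemma cannot do this: its hypothesis \eqref{Ry1-ineq} already presupposes a recurrence in which only $k\ge j$ occurs; it upgrades the exponent on $d_j$, it does not discard coarser-scale terms. Moreover, your decay rate $2^{(k-j)(M_\psi+1)a_0}$ for $k\le j$ invokes the moment condition \eqref{moment} on $\psi$ --- but the proposition assumes \emph{only} the Tauberian conditions \eqref{con1}--\eqref{con2}, no moments on $\psi$ whatsoever, and the final $N$ is arbitrary, which on your route would require infinitely many vanishing moments of $\psi$. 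The paper's fix is structural: Calder\'on's formula is applied to the \emph{dilated} distribution $f(2^{-j\vec a}\cdot)$ and then rescaled, which produces \eqref{eq:prop3psiConvf}, $\psi_j*f=(\lambda_{0,j}*\psi_{0,j})*(\psi_j*f)+\sum_{k>j}(\psi_j*\lambda_k)*(\psi_k*f)$, so that only scales $k\ge j$ ever appear; the decay $2^{(j-k)(M_\lambda+1)a_0}$ with $M_\lambda$ arbitrarily large then comes from $\lambda\in\bigcap_M\cs_M$ (because $\widehat\lambda$ vanishes near the origin, by Lemma~\ref{lem:calderonRP}), not from any moment condition on $\psi$.

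Second, your verification of the $O$-condition \eqref{Ry0-ineq} --- "$f$ has finite order, so the weight in \eqref{eq:maximalPFStype} tames the polynomial growth" --- is exactly Rychkov's insufficient argument that this paper was written to repair (cf.\ Remarks~\ref{Rych0-rem} and \ref{Rych-rem}). If $\omega=\operatorname{ord}_{\cs'}(f)>r_l$ for some $l$, the supremum defining $\psi_j^*f(x)$ need not be finite at all, so $d_j\le C2^{jN_0}$ simply fails, and Lemma~\ref{sum2-lem} is inapplicable for general $f\in\cs'$. The paper's Step~2 circumvents this: one first proves \eqref{eq:maxFuncEstimate} for an enlarged weight $\vec q\ge\max(r_1,\dots,r_n,\omega)$ (for which the $O$-condition does hold), enlarges the right-hand side by replacing $q_l$ with $r_l$ in the denominator to get \eqref{eq:constantWrongParameter}, and then argues dichotomously: either the right-hand side of \eqref{eq:maxFuncEstimate} is infinite for all $j$ (so the claim is trivial), or its finiteness at some $j_1$ yields via \eqref{eq:finiteFactorWithMaxFct} the bound $\psi_m^*f(x)\le c\,2^{mN_0}$ for $m\ge j_1$, after which Lemma~\ref{sum2-lem} applies with $j_0=j_1>0$ --- this is precisely why the authors reproved Rychkov's summation lemma allowing $j_0>0$ and $b_j,d_j\in[0,\infty]$. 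Your first-power-to-$\tau$-power splitting $|\psi_k*f|=|\psi_k*f|^\tau|\psi_k*f|^{1-\tau}$ does match the paper's \eqref{eq:28inRy}, and your treatment of $\tau>1$ is workable, but these correct pieces do not close the two gaps above.
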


As a proof ingredient we use the $\cs'$-order of $f\in\cs'(\Rn)$, written
$\operatorname{ord}_{\cs'}(f)$,  
that is the smallest $N\in\N_0$ for which there exists
$c>0$ such that, cf. \eqref{eq:seminormOneParam}, 
\begin{equation}
  |\langle f,\psi\rangle| \leq c\, p_N(\psi)\quad\text{ for all $\psi\in\cs(\Rn)$}.  
\label{ordS'-id}
\end{equation}

\begin{rem} \label{Rych-rem}
Our proof of Proposition~\ref{prop:pointwiseEstimate} follows that of
Rychkov~\cite{Ry99BPT}, although his exposition leaves a heavy burden with the
reader, since the application of Lemma~3 
there is only justified when $\operatorname{ord}_{\cs'}(f)$ is sufficiently small;
cf.\ the $O$-condition \eqref{ON0-eq} in Step~2 below. In a somewhat different
context, Rychkov gave a verbal explanation after (2.17) in \cite{Ryc01}
(with similar reasoning in \cite{ST89,Han10}) that perhaps could be carried over 
to the present situation. But we have found it simplest 
to reinforce \cite{Ry99BPT} by showing that the central $O$-condition 
\emph{is} indeed fulfilled whenever $f$ is such that the right-hand side of 
\eqref{eq:maxFuncEstimate} is finite.
In so doing, we give the full argument for the sake of completeness.
\end{rem}

\begin{proof}
\emph{Step~1.} First we choose two functions $\lambda_0,\lambda\in\cs$ with $\hat\lambda=0$ around $\xi=0$ by applying Lemma~\ref{lem:calderonRP} to the given $\psi_0,\psi\in\cs(\Rn)$.
Using Calder\'{o}n's reproducing formula, cf. \eqref{eq:calderonRP}, 
on $f(2^{-j\vec{a}}\cdot)$, dilating and convolving with $\psi_j$, we obtain
\begin{equation}\label{eq:prop3psiConvf}
  \psi_j*f = (\lambda_{0,j}*\psi_{0,j})*(\psi_j*f)+\sum_{k=j+1}^\infty (\psi_j*\lambda_k)*(\psi_k*f).
\end{equation}
To estimate $\psi_j*\lambda_k$ we use  \eqref{binomEstimate} for an arbitrary 
integer $M_\lambda \geq -1$ to get
\begin{alignat}{1}
  |\psi_j*\lambda_k(z)|  
  \leq C_{\vec{r}}\, \frac{2^{j|\vec{a}|}\, 2^{(j-k)(M_\lambda+1)a_0}}{\prod_{l=1}^{n}(1+2^{ja_l}|z_l|)^{r_l}}  
  \sideset{}{'}\max
  p_{0,\zeta}(\widehat{\lambda}) \cdot q_{M_\lambda+1,\gamma}(\widehat{\psi}).
\end{alignat}
An analogous estimate is obtained for $\lambda_{0,j}*\psi_{0,j}$, 
when \eqref{binomEstimate} is applied with  $t=1$, $M_{\lambda_0}=-1$.
Inserting these bounds into \eqref{eq:prop3psiConvf} yields for
$C_{M_\lambda,\vec{r}} = C_{M_\lambda,\vec{r},\lambda_0,\lambda,\psi_0,\psi}$, 
\begin{equation}
|\psi_j*f(y)| \leq C_{M_\lambda,\vec{r}}\, \sum_{k=j}^\infty  2^{(j-k)(M_\lambda+1)a_0} 
\int \frac{2^{j|\vec{a}|}|\psi_k*f(y-z)|}{\prod_{l=1}^n (1+2^{ja_l}|z_l|)^{r_l}} \, dz.
\label{eq:calderon_beforeSteps}
\end{equation}
Since $j\mapsto 2^{j\vec a\cdot\vec r}\prod_{l=1}^n (1+2^{j a_l}|x_l-z_l|)^{-r_l}$ is 
monotone increasing, \eqref{eq:calderon_beforeSteps} entails that for
$N=(M_\lambda+1)a_0-\vec a\cdot\vec r$,
\begin{gather}
\begin{split}
\psi_j^*f(x) 
&\leq C_{M_\lambda,\vec{r}}\, \sum_{k\geq j}  2^{(j-k)(M_\lambda+1)a_0} 
\int \frac{2^{j|\vec{a}|}|\psi_k*f(z)|}{\prod_{l=1}^n (1+2^{ja_l}|x_l-z_l|)^{r_l}} \, dz\\
&\leq C_{N}\, \sum_{k\geq j} 2^{(j-k)N} 
\int \frac{2^{k|\vec{a}|} |\psi_k*f(z)|^\tau}{\prod_{l=1}^n (1+2^{ka_l}|x_l-z_l|)^{r_l \tau}} \, dz \,\, \left(\psi_k^* f(x)\right)^{1-\tau}.
\label{eq:28inRy}
\end{split}
\end{gather}
Here $N$ can be lowered in the exponent, so \eqref{eq:28inRy} holds for all $N\ge-\vec
a\cdot\vec r$, with $N\mapsto C_{N,\vec r}$ piecewise constant; i.e.\ constant on
intervals having the form $\,](k-1)a_0,ka_0]-\vec a\cdot\vec r$, $k\in\N_0$.

Obviously this yields \eqref{eq:maxFuncEstimate} in case $\tau=1$.

\emph{Step~2.}
To cover a given $\tau\in\, ]0,1[\,$ we apply Lemma~\ref{sum2-lem} with $b_j$ as
the last integral in \eqref{eq:28inRy}: because of the inequality \eqref{eq:28inRy}, the estimate
\eqref{eq:maxFuncEstimate} with $C_{N,\vec r,\tau}=C_N$ follows for all $N>0$ by the lemma if we
can only verify the last assumption that, for some $N_0>0$,
\begin{equation}
  d_j:= \psi_j^*f(x)=O\left(2^{jN_0}\right).
\label{ON0-eq}
\end{equation}
In case $\omega\le \vec r$ for $\omega=\operatorname{ord}_{\cs'} f$, this estimate follows 
for all $j\ge0$ from
standard calculations by applying \eqref{ordS'-id} to the numerator in $\psi_j^*f(x)$.

In the remaining cases, where $\omega>r_l$ for some $l\in\{1,\ldots,n\}$, we shall show a similar
estimate unless \eqref{eq:maxFuncEstimate} is trivial. 
First we choose $\vec{q}$ such that $\vec{q}\ge \max (r_1,\ldots,r_n,\omega)$. Then
\eqref{eq:maxFuncEstimate} holds true for $\vec{q}$ and the right-hand side 
gets larger by replacing each $q_l$ with $r_l$ in the denominator. Hence we have for $N>0$,
\begin{equation}
|\psi_j * f(y)|^\tau \leq C_{N,\vec{q},\tau}\, \sum_{k\ge j} 2^{(j-k)N\tau} 
\int \frac{2^{k|\vec{a}|} |\psi_k*f(z)|^\tau}{\prod_{l=1}^n (1+2^{ka_l}|y_l-z_l|)^{r_l \tau}} \, dz.
\end{equation}
Using monotonicity as in Step~1,
the above is seen to imply, say for $N>\vec a\cdot\vec r$, $j\in\N_0$ that
\begin{equation}
\label{eq:constantWrongParameter}
  \left(\psi_{j}^* f(x)\right)^\tau \leq C_{N,\vec{q},\tau}\, 
  \sum_{k\ge j} 2^{(j-k)(N-\vec{a}\cdot\vec{r}\, )\tau} 
  \int \frac{2^{k|\vec{a}|} |\psi_k*f(z)|^\tau}{\prod_{l=1}^n (1+2^{ka_l}|x_l-z_l|)^{r_l \tau}} \, dz.
\end{equation}
(The constant depends on $\vec{q}$, i.e.\ on $f$.)
We can assume the sum on the right-hand side 
is finite for some $j_1\ge0$, $N_1>\vec a\cdot\vec r$, for else \eqref{eq:maxFuncEstimate} is trivial.
Then 
\begin{gather}
\begin{split}\label{eq:finiteFactorWithMaxFct}
  \sup_{m\ge j_1}\, &2^{(j_1-m)(N_1-\vec{a}\cdot\vec{r}\, )} \psi_{m}^* f(x)\\
  &\leq C_{N_1,\vec{q},\tau}^{1/\tau}\, 
  \Big( \sum_{k\ge j_1} 2^{(j_1-k)(N_1-\vec{a}\cdot\vec{r}\, )\tau} 
  \int \frac{2^{k|\vec{a}|} |\psi_k*f(z)|^\tau}{\prod_l (1+2^{ka_l}|x_l-z_l|)^{r_l\tau}} \, dz
  \Big)^{\frac1{\tau}} <\infty.
\end{split}
\end{gather}
This implies \eqref{ON0-eq} at once for $j\ge j_1$ and $N_0:=N_1-\vec a\cdot\vec r$,
so now Lemma~\ref{sum2-lem} yields \eqref{eq:maxFuncEstimate} for $j\ge j_1$. 
When considering the smallest such $j_1$, the right-hand side of \eqref{eq:maxFuncEstimate}  is infinite for 
every $j<j_1$ (any $N$) so that \eqref{eq:maxFuncEstimate} is trivial.

\emph{Step~3.} For $\tau>1$ we deduce
\eqref{eq:calderon_beforeSteps} with $r_l+1$ for all $l$ and afterwards apply
H\"{o}lder's inequality with dual exponents $\tau,\tau'>1$ with respect to 
Lebesgue measure and the counting measure. Simple calculations then yield \eqref{eq:maxFuncEstimate}.
\end{proof}

Now we can briefly modify the arguments in \cite{Ry99BPT} to obtain the next result.

\begin{thm}\label{inverse}
Let $\psi_0,\psi\in\cs(\Rn)$ satisfy the Tauberian conditions \eqref{con1}--\eqref{con2}.
Whenever $0< \vec{p}<\infty$, $0<q\le \infty$, $-\infty < s < \infty$ and the $\psi_j^*f$ are
defined for $\vec r$ satisfying
\begin{equation}
  r_l \min (q,p_1, \ldots, p_n)>1 , \quad l =1,\dots , n, 
\end{equation}
then there exists a constant $c>0$ such that for $f\in\cs'(\Rn)$,
\begin{equation}\label{eq-inverse}
\| 2^{sj} \psi^*_j f \, | L_{\vec{p}}(\ell_q)\|
\le c\,                    
\| 2^{sj} \psi_j * f\, | L_{\vec{p}}(\ell_q)\|.
\end{equation}
\end{thm}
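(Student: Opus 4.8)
The plan is to deduce \eqref{eq-inverse} from the pointwise estimate in Proposition~\ref{prop:pointwiseEstimate} by applying the mixed-norm maximal inequality, so the whole theorem rests on choosing the auxiliary exponent $\tau$ correctly. First I would pick $\tau>0$ with $\tau<\min(q,p_1,\dots,p_n)$ yet close enough to this minimum that the hypothesis $r_l\min(q,p_1,\dots,p_n)>1$ upgrades to $r_l\tau>1$ for every $l$; such a $\tau$ exists precisely because of the stated condition on $\vec r$. With this $\tau$ fixed, Proposition~\ref{prop:pointwiseEstimate} gives, for each $j$ and a constant $C_{N,\vec r,\tau}$,
\begin{equation*}
  \bigl(\psi_j^*f(x)\bigr)^\tau \leq C_{N,\vec r,\tau}\sum_{k\geq j}2^{(j-k)N\tau}
  \int\frac{2^{k|\vec a|}|\psi_k*f(z)|^\tau}{\prod_{l=1}^n(1+2^{ka_l}|x_l-z_l|)^{r_l\tau}}\,dz.
\end{equation*}

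Next I would recognise the inner integral as essentially an anisotropic Peetre maximal function of $|\psi_k*f|^\tau$ at ``level'' $k$: the denominator has exponents $r_l\tau$, and since $r_l\tau>1$ the integral operator is controlled by the maximal function estimate of Theorem~\ref{maxx}, applied after raising to power $\tau$. More precisely, I would fold the integral into the Hardy--Littlewood--type bound so that, after taking $1/\tau$ powers, each term is dominated by a constant times $\bigl(\mathcal M(|\psi_k*f|^\tau)(x)\bigr)^{1/\tau}$, where $\mathcal M$ is the relevant (iterated, anisotropic) maximal operator. The exponents $r_l\tau>1$ are exactly what make the kernel $\prod_l(1+2^{ka_l}|x_l-z_l|)^{-r_l\tau}$ integrable and the maximal estimate available componentwise.

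The decisive step is then to apply the $L_{\vec p/\tau}(\ell_{q/\tau})$ vector-valued maximal inequality. Multiplying by $2^{sj\tau}$, summing the geometric weights $2^{(j-k)N\tau}$ via Lemma~\ref{sum1-lem} (choosing $N$ large enough, say $N>0$, so $\delta:=N\tau>0$), and using that $\tau<\min(q,p_1,\dots,p_n)$ guarantees $p_l/\tau>1$ and $q/\tau>1$, so the Fefferman--Stein maximal theorem in the mixed-norm $L_{\vec p/\tau}(\ell_{q/\tau})$ setting is applicable to the sequence $\{|\psi_k*f|^\tau\}$. After taking the $\tau$-th root, this yields
\begin{equation*}
  \|2^{sj}\psi_j^*f\,|\,L_{\vec p}(\ell_q)\|
  \leq c\,\|2^{sj}\psi_j*f\,|\,L_{\vec p}(\ell_q)\|,
\end{equation*}
which is \eqref{eq-inverse}. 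I expect the main obstacle to be the rigorous passage from the integral kernel to the maximal operator in the \emph{mixed-norm} setting, since the iterated structure of $L_{\vec p}$ forces the maximal inequality to be applied coordinate by coordinate; the cleanest route is to invoke the already-established Theorem~\ref{maxx} (or its vector-valued analogue obtained by the same iterated Minkowski argument), so that the exponents $r_l\tau>1$ align precisely with the condition \eqref{eq-1} needed there. A secondary technical point is confirming that the constant from Lemma~\ref{sum1-lem} and the maximal bound can be combined uniformly in $f$, but this is routine once $\tau$ and $N$ are fixed.
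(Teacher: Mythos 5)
Your proposal follows essentially the same route as the paper's proof: choose $\tau$ with $\max_{l} r_l^{-1}<\tau<\min(q,p_1,\dots,p_n)$, feed Proposition~\ref{prop:pointwiseEstimate} into the majorant property of the Hardy--Littlewood maximal function applied one variable at a time (the kernel $\prod_{l}(1+2^{ka_l}|x_l-z_l|)^{-r_l\tau}$ is a product of integrable, radially decreasing one-dimensional kernels, so the convolution is dominated by the iterated operators $M_n(\cdots M_1|\psi_k*f|^\tau\cdots)$), sum the geometric tail via Lemma~\ref{sum1-lem} at the $\tau$-power level, and conclude with a vector-valued maximal inequality on $L_{\vec p/\tau}(\ell_{q/\tau})$, taking the $\tau$-th root at the end. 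That is exactly the paper's argument, where the last step is the $n$-fold application of Bagby's mixed-norm maximal inequality \cite{Bag75} for the one-variable operators $M_l$ --- this is precisely the ``mixed-norm Fefferman--Stein theorem'' your main line invokes.

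One aside in your proposal is, however, genuinely wrong and should be dropped: Theorem~\ref{maxx} cannot serve as the maximal inequality here. It presupposes the spectral condition \eqref{spu-eq}, whereas $\psi_k*f$ need not have compact spectrum (removing that restriction is the whole point of Section~\ref{Rych-sect}); moreover it is a scalar estimate, while the maximal operator sits \emph{inside} the $\ell_{q/\tau}$-sum, so a truly vector-valued inequality is indispensable. Nor does a ``vector-valued analogue obtained by the same iterated Minkowski argument'' exist: Minkowski's inequality moves norms only when the outer exponent dominates the inner one, which fails for the innermost $\ell_{q/\tau}$ when $q<\min(p_1,\dots,p_n)$; the vector-valued theorem of Bagby is a deeper result, not a consequence of scalar boundedness plus Minkowski. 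A second, minor slip: after multiplying by $2^{sj\tau}$ the weight in the $k$-sum becomes $2^{(j-k)(N+s)\tau}$, so you need $N>\max(0,-s)$ (not merely $N>0$) to get a positive decay rate $\delta=(N+s)\tau$ for Lemma~\ref{sum1-lem} when $s<0$; this is harmless since Proposition~\ref{prop:pointwiseEstimate} holds for every $N>0$.
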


\begin{proof}
The proof relies on the Hardy-Littlewood maximal function
\begin{equation}
  Mf(x) = \sup_{r>0} \frac{1}{\mbox{meas}(B(0,r))} \int_{B(0,r)} |f(x+y)|\, dy.
\end{equation}
When applied only in one variable $x_l$, we denote it by $M_l$; i.e., using the splitting
$x=(x',x_l,x'')$ we have 
 $ M_l u(x_1,\ldots ,x_n) := (Mu(x',\cdot,x''))(x_l).$
By assumption on $\vec r$, we may pick $\tau$ such that $\max_{1\le
  l\le n}\frac{1}{r_l} < \tau < \min(q,p_1,\ldots,p_n)$.  
This implies that $(1+ |z_l|)^{-r_l\, \tau} \in L_1 (\R)$, and since it is also radially decreasing,
iterated application of the majorant property of the Hardy-Littlewood 
maximal function, described in e.g. \cite[p.~57]{Ste93}, yields a bound 
of the convolution on the right-hand side of \eqref{eq:maxFuncEstimate}, hence
\begin{equation}
  \psi_j^*f(x)\le 
  C_{N,\vec{r}}^{1/\tau} 
  \big(\sum_{k\geq j} 2^{(j-k)N\tau} M_n (\ldots M_2(M_1 |\psi_k * f|^\tau)\ldots )(x)\big)^{1/\tau}.
\end{equation}
Here application of Lemma~\ref{sum1-lem} gives
\begin{alignat}{1}
\| 2^{js}\psi_j^*f\, | L_{\vec{p}}(\ell_q)\|
\leq C_{N,\vec{r}}\, \| 2^{js\tau} M_n( \ldots (M_1 |\psi_j * f|^\tau)\ldots )
 | L_{\vec{p}/\tau}(\ell_{q/\tau}) \|^{1/\tau},
\end{alignat}
whence \eqref{eq-inverse} follows by $n$-fold application of the 
maximal inequality of Bagby~\cite{Bag75} on the space 
$L_{\vec{p}/\tau}(\ell_{q/\tau})$, since $\tau< \min(q,p_1,\ldots,p_n)$;
 cf.\ also \cite[Sec.~3.4]{JoSi08}.
\end{proof}

\section{General quasi-norms and  local means}
\label{localMeans}

First of all Theorems~\ref{maxmod}, \ref{inverse} give some very general
characterisations of $F^{s,\vec a}_{\vec p,q}$. 
In fact the next result shows that in Definition~\ref{F-defn} the
Littlewood--Paley partition of unity is not essential: 
the quasi-norm can be replaced by a more general one in which
the summation to $1$ or the compact supports, or both, are lost:

\begin{thm} \label{general-thm}
Let $s\in\R$, $0<\vec p<\infty$, $0<q\le\infty$ and let 
$\psi_0,\psi\in\cs(\Rn)$ be given such that the Tauberian conditions \eqref{con1}--\eqref{con2}
are fulfilled together with a moment condition of order $M_\psi$
so that $s<(M_\psi+1)\min(a_1,\dots,a_n)$, cf.~\eqref{moment}.
When $\psi_{j,\vec a}^*f$ is defined with $\vec r>\min(q,p_1,\dots,p_n)^{-1}$,
cf.~\eqref{eq:maximalPFStype}, then the following properties of $f\in{\mathcal{S}}'({\mathbb{R}}^n)$ are equivalent:
  \begin{alignat*}{1}
    \text{\upn{(i)}}\quad & f \in F^{s,\vec a}_{\vec p,q}(\Rn),
\\
   \text{\upn{(ii)}}\quad & \| \{ 2^{sj}\psi_j*f\}_{j=0}^{\infty} | L_{\vec p}(\ell_q) \| <\infty,
\\
  \text{\upn{(iii)}}\quad &  \| \{ 2^{sj}\psi_{j,\vec a}^*f\}_{j=0}^{\infty} | L_{\vec p}(\ell_q)
  \| < \infty.
  \end{alignat*}
Moreover, the $F^{s,\vec a}_{\vec p,q}$-quasi-norm is equivalent to
those in \upn{(ii)} and \upn{(iii)}.
\end{thm}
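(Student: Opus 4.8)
The plan is to recognise that the very functions defining the $F^{s,\vec a}_{\vec p,q}$-quasi-norm fit the general framework of Section~\ref{Rych-sect}, and then to chain together the two main inequalities already established, Theorems~\ref{maxmod} and \ref{inverse}. Writing $\varphi_0:=\cfi\Phi_0$ and $\varphi:=\cfi\Phi$ for the inverse Fourier transforms of the Littlewood--Paley functions from \eqref{unity}, and setting $\varphi_j(x)=2^{j|\vec a|}\varphi(2^{j\vec a}x)$ for $j\ge1$, one has $\cf\varphi=\Phi$ vanishing identically near $\xi=0$, so $\varphi$ satisfies the moment condition \eqref{moment} of every order, while $\varphi_0,\varphi$ fulfil the Tauberian conditions \eqref{con1}--\eqref{con2} by construction. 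Since $\cfi(\Phi_j\cf f)=\varphi_j*f$, Definition~\ref{F-defn} reads $\|f\,|\,F^{s,\vec a}_{\vec p,q}\|=\|2^{sj}\varphi_j*f\,|\,L_{\vec p}(\ell_q)\|$, so property (i) is exactly finiteness of the convolution quasi-norm built from the $\varphi_j$.

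First I would dispose of the two ``vertical'' comparisons between a convolution and its maximal function. For both the given $\psi_j$ and the auxiliary $\varphi_j$ one has the trivial pointwise bound $|\psi_j*f(x)|\le\psi_j^*f(x)$ (take $y=x$, so the denominator in \eqref{eq:maximalPFStype} equals $1$), and the reverse estimate is precisely Theorem~\ref{inverse}, whose hypothesis $r_l\min(q,p_1,\dots,p_n)>1$ is guaranteed by the assumption $\vec r>\min(q,p_1,\dots,p_n)^{-1}$. Applying Theorem~\ref{inverse} once to $\psi$ and once to $\varphi$ yields $\|2^{sj}\psi_j*f\,|\,L_{\vec p}(\ell_q)\|\approx\|2^{sj}\psi_j^*f\,|\,L_{\vec p}(\ell_q)\|$ and $\|f\,|\,F^{s,\vec a}_{\vec p,q}\|\approx\|2^{sj}\varphi_j^*f\,|\,L_{\vec p}(\ell_q)\|$; in particular (ii) and (iii) are already equivalent with comparable quasi-norms.

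The heart of the argument is the ``horizontal'' comparison of the two maximal quasi-norms, obtained by invoking Theorem~\ref{maxmod} twice with the index set $\Theta$ a single point. In the first application I take $\psi_\theta=\psi$ (moment order $M_\psi$, with $s<(M_\psi+1)a_0$ by hypothesis) and $\varphi$ as above, giving $\|2^{sj}\psi_j^*f\|\le c\|2^{sj}\varphi_j^*f\|$; in the second I reverse the roles, taking $\psi_\theta=\varphi$ (which has vanishing moments of every order, so $s<(M'+1)a_0$ holds for a suitable finite $M'$) and $\varphi$-role played by the given $\psi$ (Tauberian by hypothesis), giving $\|2^{sj}\varphi_j^*f\|\le c\|2^{sj}\psi_j^*f\|$. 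The integer $M$ in Theorem~\ref{maxmod} may be chosen as large as needed so that $(M+1)a_0+s>2\vec a\cdot\vec r$, and for a single $\theta$ the constants $A,B,C,D$ are patently finite, being fixed seminorms of Schwartz functions. Thus $\|2^{sj}\psi_j^*f\|\approx\|2^{sj}\varphi_j^*f\|$, and assembling this with the vertical comparisons produces the full chain $\|f\,|\,F^{s,\vec a}_{\vec p,q}\|\approx\|2^{sj}\varphi_j^*f\|\approx\|2^{sj}\psi_j^*f\|\approx\|2^{sj}\psi_j*f\|$, which is precisely (i)$\Leftrightarrow$(ii)$\Leftrightarrow$(iii) together with the asserted quasi-norm equivalences.

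I do not expect a genuine obstacle here: the theorem is a synthesis in which essentially all the analytic difficulty has already been absorbed into Theorems~\ref{maxmod} and \ref{inverse}. The only points demanding care are the verification that the Littlewood--Paley functions $\varphi_0,\varphi$ truly meet the Tauberian and moment hypotheses (routine from \eqref{unity}) and, more crucially, that every inequality invoked is valid for an \emph{arbitrary} $f\in\cs'$ rather than one assumed a priori to lie in $F^{s,\vec a}_{\vec p,q}$. The latter is ensured because both cited theorems are stated for all temperate distributions, so finiteness of any one of the three quantities propagates to the others through the chain of estimates.
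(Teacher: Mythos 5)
Your proposal is correct and takes essentially the same route as the paper's own proof, which likewise combines the trivial pointwise bound $\psi_j*f\le\psi_{j,\vec a}^*f$, Theorem~\ref{inverse} applied to both the given pair $\psi_0,\psi$ and to $\varphi_0=\cfi\Phi_0$, $\varphi=\cfi\Phi$, and two applications of Theorem~\ref{maxmod} with a singleton index set $\Theta$ in exactly the role-swapped pattern you describe. The one point you state a little too quickly — that $\varphi$ satisfies \eqref{con2} ``by construction'' (the $\Phi$ of \eqref{unity} is supported in $\{1/2\le|\xi|_{\vec a}\le 3/2\}$, so positivity on a full ratio-four annulus is not automatic, though the needed conclusion of Lemma~\ref{lem:calderonRP} holds trivially for a partition of unity) — is tacitly assumed in the paper's proof as well, so it does not distinguish your argument from theirs.
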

\begin{proof}
  Since $\psi_j*f(x)\le \psi_{j,\vec a}^*f(x)$ is trivial, clearly \upn{(iii)}$\implies$\upn{(ii)};
  the converse holds by Theorem~\ref{inverse}. To obtain
  \upn{(iii)}$\implies$\upn{(i)}, one may in the Lizorkin--Triebel norm estimate the
  convolutions by $(\cfi \Phi)_{j,\vec a}^*f$, and the resulting norm is estimated by the one in \upn{(iii)}
  by means of Theorem~\ref{maxmod} (with a trivial index set like $\Theta=\{1\}$).
  That \upn{(i)}$\implies$\upn{(iii)} follows by using Theorem~\ref{maxmod} to estimate from
  above by the quasi-norm defined from $(\cfi \Phi)_{j,\vec a}^*f$, with all $r_l$ so large that
  Theorem~\ref{inverse} gives control by the $\cfi \Phi_j*f$.
\end{proof}

From the above it is e.g.\ obvious that the space $F^{s,\vec a}_{\vec p,q}$ does not
depend on the Littlewood--Paley partition of unity in \eqref{unity}, and that different choices yield 
equivalent quasi-norms.

As an immediate corollary of Theorem~\ref{general-thm},
there is the following characterisation of $F^{s,\vec a}_{\vec p,q}$
in terms of integration kernels.
It has been well known in the isotropic case:

\begin{thm}\label{local}
Let $k_0,k^0 \in \cs(\Rn)$ such that $\int k_0(x)\, dx \neq 0 \neq \int k^0(x)\, dx$ and
set $k (x)= \Delta^N k^0(x)$ for some $N\in\N$.
When $0 < \vec{p} <\infty$, $0< q \le \infty$, and $s < 2N\, \min(a_1,\ldots,a_n)$,
then a distribution $f \in \cs'(\Rn)$ belongs to $F^{s,\vec{a}}_{\vec{p},q}(\Rn)$ 
if and only if 
\begin{equation}\label{eq:charOfF}
\|\, f \, | F^{s,\vec{a}}_{\vec{p},q}\|^* :=
\| \, k_0 * f\, |L_{\vec{p}}\| + \| \{2^{sj} k_j * f\}_{j=1}^\infty \, | L_{\vec{p}}(\ell_q)\|<\infty .
\end{equation}
Furthermore, $\|\, f \, | F^{s,\vec{a}}_{\vec{p},q}\|^*$ is an equivalent
quasi-norm on $F^{s,\vec{a}}_{\vec{p},q}(\Rn)$.
\end{thm}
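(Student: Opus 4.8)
The plan is to obtain Theorem~\ref{local} as an immediate consequence of Theorem~\ref{general-thm}, the whole task reducing to checking that the pair $(k_0,k)$ meets the Tauberian and moment hypotheses there with the correct moment order. Accordingly I would set $\psi_0:=k_0$ and $\psi:=k=\Delta^N k^0$, and begin by computing the relevant symbol. With the convention $D^\alpha=(-\mathrm{i}\partial)^\alpha$ one has $\cf(\Delta u)(\xi)=-|\xi|^2\cf u(\xi)$, so $\cf k(\xi)=(-1)^N|\xi|^{2N}\cf k^0(\xi)$, where $|\xi|^2=\xi_1^2+\dots+\xi_n^2$ is the Euclidean length. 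The two hypotheses $\int k_0\,dx\neq 0\neq\int k^0\,dx$ say precisely that $\cf k_0(0)\neq 0$ and $\cf k^0(0)\neq 0$.

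For the Tauberian conditions I would use continuity of $\cf k_0$: since $\cf k_0(0)\neq 0$, there is $\varepsilon>0$ with $\cf k_0\neq 0$ on $\{|\xi|_{\vec a}<2\varepsilon\}$, which is \eqref{con1}. Shrinking $\varepsilon$ so that also $\cf k^0\neq 0$ on $\{|\xi|_{\vec a}<2\varepsilon\}$, one gets that $\cf k(\xi)=(-1)^N|\xi|^{2N}\cf k^0(\xi)$ is nonzero on the annulus $\tfrac{\varepsilon}{2}<|\xi|_{\vec a}<2\varepsilon$, because there $\xi\neq 0$ forces $|\xi|^{2N}\neq 0$; this is \eqref{con2}. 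For the moment condition I would note that $P(\xi):=|\xi|^{2N}$ is homogeneous of degree $2N$, so $D^\beta P(0)=0$ whenever $|\beta|<2N$; applying Leibniz' rule to $\cf k=(-1)^N P\cdot\cf k^0$, every term for $|\alpha|\leq 2N-1$ carries a factor $D^\beta P(0)$ with $|\beta|\leq|\alpha|<2N$, hence vanishes. Thus \eqref{moment} holds with $M_\psi=2N-1$, whereby $(M_\psi+1)\min(a_1,\dots,a_n)=2N\min(a_1,\dots,a_n)$, so the standing hypothesis $s<2N\min(a_1,\dots,a_n)$ is exactly the requirement $s<(M_\psi+1)a_0$ of Theorem~\ref{general-thm}.

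Having verified the hypotheses, I would pick $\vec r$ with $r_l>\min(q,p_1,\dots,p_n)^{-1}$ and invoke the equivalence of (i) and (ii) in Theorem~\ref{general-thm}: this gives $f\in F^{s,\vec a}_{\vec p,q}$ if and only if $\|\{2^{sj}k_j*f\}_{j=0}^\infty\,|\,L_{\vec p}(\ell_q)\|<\infty$, with equivalent quasi-norms. The only remaining bookkeeping is that this combined $\ell_q$-expression is equivalent to the split form $\|k_0*f\,|\,L_{\vec p}\|+\|\{2^{sj}k_j*f\}_{j=1}^\infty\,|\,L_{\vec p}(\ell_q)\|$ appearing in \eqref{eq:charOfF}: since the $j=0$ entry carries weight $2^{s\cdot 0}=1$, peeling off a single sequence term changes the $L_{\vec p}(\ell_q)$ quasi-norm only up to a constant depending on $q$, via the $d$-subadditivity of $\|\cdot\,|\,L_{\vec p}(\ell_q)\|$.

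I do not expect a genuine obstacle, since all the analytic content sits in Theorem~\ref{general-thm}; the work is the symbol computation together with the observation that $\Delta^N$ supplies exactly $2N-1$ vanishing moments, matched precisely to the smoothness threshold. The one point deserving care is ensuring that a \emph{single} $\varepsilon$ serves both \eqref{con1} and \eqref{con2} simultaneously and that the annulus in \eqref{con2} stays bounded away from the origin, so that the factor $|\xi|^{2N}$ does not destroy the lower Tauberian bound for $k$.
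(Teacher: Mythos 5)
Your proposal is correct and is precisely the paper's own route: the paper presents Theorem~\ref{local} as an immediate corollary of Theorem~\ref{general-thm}, the implicit content being exactly your verification that $\cf k(\xi)=(-1)^N|\xi|^{2N}\cf k^0(\xi)$ yields the Tauberian conditions \eqref{con1}--\eqref{con2} (from $\cf k_0(0)\neq 0\neq\cf k^0(0)$ and continuity, with one common $\varepsilon$) and the moment condition \eqref{moment} with $M_\psi=2N-1$, so that $s<2N\min(a_1,\dots,a_n)$ is exactly $s<(M_\psi+1)a_0$. The remaining step, splitting off the $j=0$ term of the $L_{\vec p}(\ell_q)$ quasi-norm up to equivalence via $d$-subadditivity (where $d$ depends on $\vec p$ as well as $q$), is the same routine bookkeeping the paper leaves unstated.
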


In \eqref{eq:charOfF}, the functions $k_j$, $j\ge1$ are 
given by $k_j(x)=2^{j|\vec a|}k(2^{j\vec a}x)$; cf.~\eqref{eq:defOfSubscript_j}.

\begin{rem}
Obviously, we may choose $k_0,k^0$ such that both functions have compact support.
In this case Triebel termed $k_0$ and $k$ kernels of \emph{local means},
and in \cite[2.4.6]{T3} he proved that \eqref{eq:charOfF} is an equivalent
quasi-norm on the $f$ belonging a priori to the isotropic space $F^{s}_{p,q}$. 
This was carried over to anisotropic, but unmixed spaces by Farkas \cite{Far00}.
Extension to function spaces with generalised smoothness 
has been done by Farkas and Leopold \cite{FaLe06}; 
and to spaces of dominating mixed smoothness by Vybiral~\cite{Vyb06} and Hansen~\cite{Han10}.
\end{rem}

\begin{rem}
Bui, Paluszinki and Taibleson \cite{BuPaTa96} obtained a characterisation, 
i.e.\ equivalence for all 
$f\in\cs'$,  in the isotropic (but weighted) case,  
which Rychkov~\cite{Ry99BPT} simplified to the present discrete Littlewood-Paley decompositions.
Our Theorem~\ref{local} generalises this in two ways, i.e.\ we prove a
characterisation of $F^{s,\vec a}_{\vec p,q}$ that has anisotropies both in terms of $\vec a$ and
mixed norms.
\end{rem}

\providecommand{\bysame}{\leavevmode\hbox to3em{\hrulefill}\thinspace}
\providecommand{\MR}{\relax\ifhmode\unskip\space\fi MR }
\providecommand{\MRhref}[2]{%
  \href{http://www.ams.org/mathscinet-getitem?mr=#1}{#2}
}
\providecommand{\href}[2]{#2}

\end{document}